\newtheorem{tw}{Theorem}[section]
\newtheorem{lem}[tw]{Lemma}
\newtheorem{defi}[tw]{Definition}
\newtheorem{prop}[tw]{Proposition}
\newtheorem{rem}[tw]{Remark}
\newtheorem{cor}[tw]{Corollary}
\newtheorem{egg}[tw]{Example}
\title{Covering spaces of symplectic toric orbifolds}
\author{Paweł Raźny$^1$}
\address{$^1$Institute of Mathematics \\
	Faculty of Mathematics and Computer Science \\
	Jagiellonian University in Cracow
	}
\author{Nikolay Sheshko$^2$}
\address{$^2$Department of Mathematics \\
University of Illinois at Urbana-Champaign}
	\keywords{Symplectic toric orbifolds} \subjclass[2010]{57S12}
\date{}
\begin{document}

\maketitle
\begin{abstract} In this article we study covering spaces of symplectic toric orbifolds and symplectic toric orbifold bundles. In particular, we show that all symplectic toric orbifold coverings are quotients of some symplectic toric orbifold by a finite subgroup of a torus. We then give a general description of the labeled polytope of a toric orbifold bundle in terms of the polytopes of the fiber and the base. Finally, we apply our findings to study the number of toric structures on products of labeled projective spaces.
\end{abstract}
\section{Introduction}
The initial point of this article is the observation that in sharp contrast to the manifold case (where it is well known that compact symplectic toric manifolds are simply connected) there exist non-trivial orbifold coverings of toric symplectic orbifolds. A simple precedence for this is given by taking the quotient $\tilde{X}\slash G$ of a symplectic toric orbifold $\tilde{X}$ by a finite subgroup $G\subset\mathbb{T}^n$. In fact, we prove that all connected coverings of a connected symplectic toric orbifold are of this form. This naturally leads us to the study of toric orbifold bundles and the labeled polytope of its total space. In particular, we provide a description of the labeled polytope of the total space based on the labeled polytopes of the fiber and the base similar to that found in \cite{DmD3,DmD}.
\newline\indent The paper is organized as follows. In the subsequent section we recall some basic facts about smooth and symplectic orbifolds leading to the foundational results of \cite{LT} concerning symplectic toric orbifolds (such as the convexity Theorem and an orbifold version of the Delzant correspondence). The main purpose of this section is to establish notation and make the article more self-contained. With similar purpose in mind we devote section $3$ to present labeled projective spaces which constitute an interesting subclass of symplectic toric orbifolds containing complex projective spaces and weighted projective spaces as special cases (cf. \cite{AbreuToricOrbifolds}). Section $4$ contains our main general results (with proves) concerning toric orbifold coverings and toric orbifold bundles. The remainder of the paper is devoted to applications. We warm up in section $5$ by proving that there is a unique toric structure on each labeled projective space. In section $6$ we extend this result to products of labeled projective spaces of dimension at least $4$. The argument here uses our results on toric orbifold bundles and toric orbifold coverings as well as a repurposing of an argument from \cite{DmD}. The final section of this article treats the remaining cases (i.e. when at least one of the labeled projective spaces is a $(p,q)$-football). Aside from an orbifold version of the corresponding results from \cite{DmD} (i.e. the proof of uniqueness under some restrictions on the symplectic form) we also show that stronger results can be obtained in the presence of singularities with locally maximal rank of the isotropy group.

\section{Symplectic toric orbifolds}
The purpose of this section is to recall elementary information concerning symplectic toric orbifolds and to establish notation in the process.
\newline\indent Let $X$ be a topological space, and $U\subset X$ be its open subset. An {\it orbifold chart} $(\widetilde{U}, H,\phi)$ consists of a connected open set $\widetilde{U}\subset \mathbb{R}^n$, a finite group $H$ acting smoothly and effectively on $\widetilde{U}$ and a continuous $H$-invariant map $\phi \colon \widetilde{U} \to U$, which induces the homeomorphism between $\widetilde{U}/H$ and $U$. An {\it embedding} $\lambda \colon (\widetilde{U}_1,H_1,\phi_1) \hookrightarrow (\widetilde{U}_2,H_2,\phi_2)$ of orbifold charts is a smooth embedding $\lambda \colon \widetilde{U}_1 \hookrightarrow \widetilde{U}_2$, which satisfies $\phi_2 \circ \lambda = \phi_1$. An {\it orbifold atlas} on $X$ is a collection $\mathcal{A}=\{(\widetilde{U}_i,H_i,\phi_i)\}_{i\in I}$ of orbifold charts which satisfies the following condition. For any $i,j\in I$, and any $x \in U_i\cap U_j$, there exists a neighborhood $U_k$ of $x$, such that there is an orbifold chart $(\widetilde{U}_k,H_k,\phi_k)$ of $U_k$, together with embeddings $(\widetilde{U}_k,H_k,\phi_k) \hookrightarrow (\widetilde{U}_i,H_i,\phi_i)$ and $(\widetilde{U}_k,H_k,\phi_k) \hookrightarrow (\widetilde{U}_j,H_j,\phi_j)$. We say that an atlas $\mathcal{A}$ {\it refines} the atlas $\mathcal{B}$, if each chart from $\mathcal{A}$ admits an embedding to some chart of $\mathcal{B}$. Two atlases are called equivalent, if they admit a common refinement. Similar to the manifold case, each orbifold atlas is always contained in a unique maximal one. 

\begin{defi}
An $n$-dimensional smooth orbifold $X$ consists of a Hausdorff paracompact topological space $|X|$ and an {\it orbifold structure}, which is the equivalence class of orbifold atlases $[\mathcal{A}]$. We say that an orbifold chart is a {\it chart} on $X$ if it is contained in some atlas from $[\mathcal{A}]$. Note that if in an atlas $\mathcal{A}$ all groups $H_i$ are trivial, then $\mathcal{A}$ is an atlas of a manifold structure. To simplify notation we shall refer to the underlying topological space by $X$ as well.
\end{defi}
\begin{defi}
Take an orbifold $X$ and some point $x\in X$ with a chart $(\widetilde{U},H,\phi)$ and $x=\phi(\widetilde{x})\in U$. A {\it local group} $\Gamma_x$ is the isomorphism class of the isotropy subgroup $H_{\widetilde{x}}<H$ and it doesn't depend on the choice of a chart. We say that a point is singular if it has non-trivial isotropy group and it is regular otherwise.
\end{defi}
Next we are going to need a notion of morphism of orbifolds. While different definitions of morphisms are considerate (depending on the context) we use the following definition from \cite{IO}.
\begin{defi}
Let $X$ and $Y$ be two orbifolds. Then a smooth map $f \colon X\to Y$ consists of a continuous map $f$ and smooth local lifts at each $x\in X$. More precisely we have:

\begin{itemize}
    \item charts $(\widetilde{U},\Gamma_x,\phi)$ and $(\widetilde{V}, \Gamma_{f(x)}, \psi)$ around $x$ and $f(x)$ respectively, such that $f(U)\subset V$,
    
    \item a homomorphism $\bar{f}_x\colon \Gamma_x \to \Gamma_{f(x)}$,
    
    \item a smooth $\bar{f}_x$-equivariant map $\widetilde{f}_x\colon \widetilde{U}\to \widetilde{V}$ such that the following diagram commutes:
    \begin{center}
    \begin{tikzcd}
    \widetilde{U} \arrow[r,"\widetilde{f}_x"] \arrow[d,"\phi"] & \widetilde{V} \arrow[d, "\psi"] \\
    U \arrow[r,"f"] & V
    \end{tikzcd}
    \end{center}
\end{itemize}
A smooth orbifold map $f\colon X \to Y$ is called a {\it diffeomorphism} if it admits a smooth inverse.
\end{defi}
A special case of a smooth map is given by the following definition which will be of use later on:
\begin{defi} Let $X$ be a smooth orbifold. An orbifold covering of $X$ is a pair $(\hat{X},\rho)$ where $\hat{X}$ is a smooth orbifold and $\rho:\hat{X}\to X$ is a smooth map satisfying:
\begin{enumerate}
    \item For each $x\in X$ there is a chart $(\tilde{U},G,\phi)$ over $x$ such that $\rho^{-1}(U)$ is a disjoint union of open subsets $V_i$
    \item Each $V_i$ admits a chart of the form $(\tilde{U},G_i,\phi_i)$ with $G_i\subset G$ such that $\rho$ lifts to the identity on $\tilde{U}$ with inclusion as the corresponding group homomorphism.
\end{enumerate}
\end{defi}
Most of the well known constructions from differential geometry readily generalize to orbifolds. Starting from the tangent bundle on the orbifold chart $\tilde{U}$ the group action extends via derivative to its total space and then by taking the quotient with respect to this action one gets the tangent "bundle" $TU$ over $U$.
\begin{rem} It is important to note that this object is not a bundle in the classical sense since the fibers over singular points are only quotients of vector spaces. The general framework for dealing with this sort of objects is that of a orbifold vector bundle (see \cite{BG,IO} for details). 
\end{rem}
One can then use the embeddings of charts defined earlier to glue $TU_i$ together getting the orbifold tangent bundle $TX$ as a result. It is easy to generalize this construction to get various tensor bundles as well as their sections. In particular the smooth sections of the $k$-th exterior power of $T^*X$ are called differential $k$-forms. Moreover, since the operator $d$ commutes with the group action in all orbifold charts it is well defined on differential $k$-forms on the orbifold. This allows us to define the orbifold de Rham cohomology in the standard fashion. Finally, the following version of the de Rham Isomorphism Theorem holds: 
\begin{tw}
Let $X$ be an orbifold. Then orbifold de Rham cohomology is isomorphic to singular cohomology of the underlying topological space: $$H^i_{\text{dR}}(X)\simeq H^i(X,\mathbb{R})$$
\end{tw}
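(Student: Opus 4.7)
The plan is to adapt the standard sheaf-theoretic proof of the de Rham theorem to the orbifold setting. Let $\underline{\mathbb{R}}$ denote the constant sheaf on the underlying topological space $X$ and $\Omega^\bullet_X$ the complex of sheaves whose sections over an open $U \subset X$ are orbifold differential $k$-forms. I would establish the isomorphism in three steps: (i) prove that $(\Omega^\bullet_X, d)$ is a resolution of $\underline{\mathbb{R}}$; (ii) show that each $\Omega^k_X$ is a fine sheaf and hence acyclic; (iii) conclude via the classical comparison between sheaf cohomology of $\underline{\mathbb{R}}$ and singular cohomology for nice spaces.

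The heart of step (i) is an orbifold Poincaré lemma. Fix a point $x \in X$; after shrinking we may assume $x$ lies in a chart $(\widetilde{U}, H, \phi)$ with $\widetilde{U} \subset \mathbb{R}^n$ a star-shaped $H$-invariant neighborhood on which $H$ acts linearly (such charts can be arranged by linearizing the $H$-action near a preimage of $x$). If $\omega$ is a closed $k$-form on $U$ with $k \geq 1$, it lifts to a closed $H$-invariant form $\widetilde{\omega}$ on $\widetilde{U}$. The classical Poincaré lemma supplies $\eta$ with $d\eta = \widetilde{\omega}$, and the average $\bar{\eta} := \frac{1}{|H|}\sum_{h \in H} h^*\eta$ is still a primitive of $\widetilde{\omega}$ because each $h \in H$ commutes with $d$ and fixes $\widetilde{\omega}$. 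Hence $\bar\eta$ descends to a $(k-1)$-form on $U$ with $d\bar\eta = \omega$. In degree zero, $H$-invariant smooth functions on a connected $\widetilde{U}$ killed by $d$ are precisely constants, giving the augmentation $\underline{\mathbb{R}} \hookrightarrow \Omega^0_X$.

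For step (ii), since $X$ is Hausdorff and paracompact it admits ordinary partitions of unity subordinate to a cover by uniformized charts $U_i = \widetilde{U}_i/H_i$. Pulling a bump back to the chart and averaging over $H_i$ produces an $H_i$-invariant bump on $\widetilde{U}_i$ that descends to an orbifold function on $U_i$, so orbifold partitions of unity exist. Multiplication by such a partition splits any section of $\Omega^k_X$ into pieces supported in individual charts, which makes $\Omega^k_X$ fine and therefore acyclic. Standard homological algebra then yields $H^i(\Gamma(X, \Omega^\bullet_X)) \cong H^i(X, \underline{\mathbb{R}})$.

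The remaining subtlety is step (iii), comparing sheaf cohomology of $\underline{\mathbb{R}}$ with singular cohomology; this comparison holds once $X$ is paracompact, Hausdorff, and locally contractible. Local contractibility follows from the fact that each point has a neighborhood of the form $\widetilde{U}/H$ with $\widetilde{U}$ star-shaped and $H$ acting linearly: the radial contraction on $\widetilde{U}$ is $H$-equivariant and hence descends to a contraction of $\widetilde{U}/H$. I expect the main obstacle to be bookkeeping the equivariance carefully while constructing averaged primitives and partitions of unity; every other step reduces to the corresponding manifold statement once the averaging trick is available, which is precisely why finiteness of the local groups is essential and why the analogous statement would fail over $\mathbb{Z}$-coefficients.
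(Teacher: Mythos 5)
Your proposal is correct: the orbifold Poincar\'e lemma via Bochner linearization plus averaging over the finite chart group, fineness of $\Omega^k_X$ via averaged partitions of unity, and the comparison of sheaf and singular cohomology using local contractibility (radial contraction of a star-shaped linearized chart descending to $\widetilde{U}/H$) together give a complete sheaf-theoretic proof, and the finiteness of the local groups enters exactly where you say it does. The paper itself gives no proof of this theorem, only a reference to \cite{BG,IO}, and the argument you outline is essentially the standard one found there, so there is nothing to reconcile beyond that.
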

\begin{rem} For details of the above construction as well as further results concerning the differential topology and differential geometry of orbifolds see \cite{BG,IO}.
\end{rem}
With the above understood we can now specialize to symplectic and toric orbifolds:
\begin{defi} A symplectic orbifold is a pair $(X,\omega)$, where $\omega$ is a closed non-degenerate $2$-form. A smooth action of a Lie group $G$ on $X$ is called Hamiltonian if there is a $G$-equivariant smooth map $\mu \colon X\to \mathfrak{g}^*$, such that for each $\xi\in \mathfrak{g}$ we have that $d\langle \mu, \xi \rangle = \iota_{\xi^{\#}} \omega$ (where $\xi^{\#}$ denotes the corresponding fundamental vector field). A compact symplectic orbifold $(X,\omega)$ of dimension $2n$ is called toric, if it is equipped with an effective Hamiltonian action of a torus $\mathbb{T}^n$ which admits a moment map $\mu$. We say that two toric orbifolds are isomorphic if there exists a $\lambda$-equivariant symplectomorphism between them (with respect to some automorphism $\lambda$ of the torus). 
\end{defi}
\begin{rem} In the literature there is some ambiguity concerning the previous definition. It is sometimes required that the moment map be fixed which also gives rise to a stronger notion of isomorphism by demanding that the symplectomorphism preserves the chosen moment map. However, for our purposes this definition is more suitable as we will consider in later section the number of symplectic structures on labeled projective spaces up to isomorphism in the above sense. Consequently whenever we refer to the number of toric structures (or the uniqueness of toric structure) on a symplectic manifold we will mean the number of isomorphism classes (similarly to the results in \cite{DmD}).
\end{rem}
In the work of Lerman and Tolman \cite{LT} it is shown that an analogue of Atiyah convexity theorem and Delzant theorem holds for toric orbifolds. 
\begin{defi}
A rational labeled polytope is a simple rational polytope $\Delta\subset\mathbb{R}^n$ together with positive integers $m_F$ (called a label) assigned to each facet $F$. Two rational labeled polytopes $M_1$ and $M_2$ are called equivalent if they differ by a translation preserving the labels.
\end{defi}

\begin{tw}[Convexity for toric orbifolds]
Let a torus act on a compact connected symplectic orbifold $(X,\omega)$ with a moment map $\mu\colon X\to \mathfrak{t}^*$. The image of the moment map $\mu(X)$ is a rational convex polytope. Moreover, the image of $\mu$ is a convex hull of images of points fixed by the torus.
\end{tw}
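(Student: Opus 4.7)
The plan is to adapt the Atiyah--Guillemin--Sternberg convexity proof to the orbifold setting. The statement reduces to a claim about individual components of $\mu$: for every $\xi\in\mathfrak{t}$, the level sets of $\mu^\xi := \langle \mu, \xi\rangle \colon X \to \mathbb{R}$ are connected, and its image is a closed interval whose endpoints are values of $\mu^\xi$ at $\mathbb{T}^n$-fixed points. Granting this, convexity of $\mu(X)$ follows by induction on the rank of the acting torus, as in Atiyah's original argument: the moment map for a codimension-one subtorus has connected fibers, on each of which the remaining circle admits a moment map with interval image, and these fit together to show $\mu(X)$ is convex.

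The main technical ingredient is the connectedness of level sets of $\mu^\xi$, which I would establish by Morse--Bott theory on the orbifold $X$. One shows that $\mu^\xi$ is Morse--Bott with critical set equal to the fixed-point suborbifold of the subcircle generated by $\xi$, and that every Morse--Bott index and coindex is even. The even-index property follows from an equivariant Darboux-type local normal form: near a fixed point $x$ there is a chart $(\widetilde{U},\Gamma_x,\phi)$ in which $\widetilde{U}\subset T_xX$ is $\Gamma_x$-invariant, the symplectic form is the constant form $\omega_0$, and the subcircle acts linearly and unitarily with respect to a compatible complex structure; the weight-space decomposition then presents the Hessian of $\mu^\xi$ as a direct sum of $2\times 2$ blocks of definite sign. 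Connectedness follows by the standard argument that a Morse--Bott function whose critical components all have index and coindex at least $2$ has connected sublevel and level sets, applied to the underlying topological space after lifting the analysis to local uniformizing charts.

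Rationality of $\mu(X)$ and the identification with the convex hull of images of $\mathbb{T}^n$-fixed points both follow from the same local normal form, now applied at a fixed point of the full torus. Locally $X$ is modeled on $\mathbb{C}^n/\Gamma_x$ with a linear $\mathbb{T}^n$-action whose weights $\alpha_1,\dots,\alpha_n\in\mathfrak{t}^*$ are rational (they are weights of a representation of a finite cover of $\mathbb{T}^n$), and the local image of $\mu$ is the affine rational cone $\mu(x) + \mathbb{R}_{\geq 0}\langle \alpha_1,\dots,\alpha_n\rangle$. Hence each facet of $\mu(X)$ has a rational inward normal, and the vertices of $\mu(X)$ are precisely the images of $\mathbb{T}^n$-fixed points.

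The principal obstacle is the Morse--Bott step: one must justify that the critical set is a suborbifold with orbifold normal bundle of even rank, and that the connectedness argument is not disrupted by the orbifold singularities. I would handle this by performing the gradient-flow analysis inside local $\Gamma_x$-invariant charts and patching by equivariance, the decisive input being the even-index conclusion from the equivariant normal form.
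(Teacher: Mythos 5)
The paper does not prove this statement at all: it is recalled verbatim from Lerman--Tolman \cite{LT}, so there is no internal proof to compare against. Your outline is essentially the proof given in \cite{LT}, namely Atiyah's inductive scheme reduced to connectedness of level sets of $\mu^{\xi}$, established via orbifold Morse--Bott theory with even indices coming from an equivariant local normal form, plus rationality of the local cones at fixed points; the two genuinely technical inputs you defer (the equivariant Darboux/local normal form for Hamiltonian torus actions on symplectic orbifolds and the orbifold Morse--Bott connectedness statement) are precisely the lemmas Lerman and Tolman prove, so your sketch is correct in structure but leans on exactly the content that constitutes the actual work in the cited source.
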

\begin{tw}[Delzant correspondence for orbifolds]
\label{DelzantOrbifolds}
Given a simple rational polytope $\Delta\subset \mathbb{R}^n$, with a label $m_F$ attached to each facet $F$ of $\Delta$, there exists a toric orbifold $(X,\omega)$ together with a chosen moment map $\mu$, such that $\mu(X)=\Delta$ and the orbifold structure groups of points which map to the interior of facet $F$ are $\mathbb{Z}/m_F\mathbb{Z}$. Moreover, if two labeled rational polytopes are equivalent, their associated toric orbifolds are isomorphic. 
\end{tw}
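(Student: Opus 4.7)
The plan is to adapt Delzant's symplectic reduction construction to the orbifold setting. Let $d$ denote the number of facets of $\Delta$, write $\nu_i \in \mathbb{Z}^n$ for the primitive inward-pointing normal of $F_i$, and set $\tilde{\nu}_i := m_{F_i}\nu_i$. Choose constants $\lambda_i \in \mathbb{R}$ so that $\Delta = \{x : \langle x, \tilde{\nu}_i\rangle \geq \lambda_i \text{ for all } i\}$, and define the linear map $\beta \colon \mathbb{R}^d \to \mathbb{R}^n$ by $\beta(e_i) = \tilde{\nu}_i$. Since $\Delta$ is simple and $n$-dimensional, $\beta$ is surjective, and its kernel $\mathfrak{k}$ is a rational subspace; exponentiating yields a $(d-n)$-dimensional subtorus $K \subset \mathbb{T}^d$.

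Next, I would equip $\mathbb{C}^d$ with its standard symplectic form and $\mathbb{T}^d$-moment map $\mu_{\mathbb{T}^d}(z) = \tfrac{1}{2}(|z_1|^2,\dots,|z_d|^2) - \lambda$, and restrict along the inclusion $\mathfrak{k} \hookrightarrow \mathbb{R}^d$ to obtain the $K$-moment map $\mu_K$. Forming the reduction $X := \mu_K^{-1}(0)/K$, the reduced symplectic form $\omega$ together with the residual Hamiltonian action of the quotient torus $\mathbb{T}^d/K$, which has dimension $n$, should provide the desired toric orbifold, with moment map $\mu$ induced by $\mu_{\mathbb{T}^d}$ modulo the annihilator of $\mathfrak{k}$.

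The main technical step, and the one I expect to be the chief obstacle, is verifying that $X$ is genuinely a symplectic orbifold with the prescribed orbifold structure. Unlike the Delzant manifold case, $K$ need not act freely on $\mu_K^{-1}(0)$, because the $\tilde{\nu}_i$ need not extend to a $\mathbb{Z}$-basis at each vertex; rather one uses the simplicity of $\Delta$ to show that $0$ is a regular value of $\mu_K$ and that the $K$-action has only finite stabilizers. A local slice argument around each orbit then produces orbifold charts on $X$ in which the reduction is modeled on a quotient of a Euclidean ball by a finite abelian group. To identify the orbifold structure groups at a facet, consider a point $z \in \mu_K^{-1}(0)$ whose only vanishing coordinate is $z_i$: its $\mathbb{T}^d$-stabilizer is the $i$-th coordinate circle $S^1_i$, and since $\nu_i$ is primitive, $S^1_i \cap K$ consists of precisely those $t$ with $t\tilde{\nu}_i \in \mathbb{Z}^n$, which is exactly $\mathbb{Z}/m_{F_i}\mathbb{Z}$. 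Applying the convexity theorem to the residual action shows that $\mu(X) = \Delta$.

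Finally, if $\Delta$ and $\Delta'$ differ by a translation by $v \in \mathbb{R}^n$, the construction uses the same $\beta$ and $K$, and only the constants $\lambda_i$ shift by $\langle v, \tilde{\nu}_i\rangle$; the corresponding change in $\mu_{\mathbb{T}^d}$ lies in the annihilator of $\mathfrak{k}$, so $\mu_K$ is unchanged and the reduced space is the same, with the induced moment map translated by $v$. This yields the required equivariant isomorphism of toric orbifolds.
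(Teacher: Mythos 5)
Your proposal is correct and follows essentially the same route as the paper, which (citing Lerman--Tolman) sketches exactly this construction: reduce $\mathbb{C}^d$ by the subtorus $K=\ker\bigl(\beta\colon\mathbb{T}^d\to\mathbb{T}^n\bigr)$ with $\beta(e_i)=m_i\eta_i$, the labels entering through the rescaled normals, and read off the facet structure groups as $S^1_i\cap K\cong\mathbb{Z}/m_i\mathbb{Z}$. Your treatment of the translation-equivalence via the shift of the constants lying in the annihilator of $\mathfrak{k}$ is also the standard argument and is fine.
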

We will now present a sketch of Delzant construction for orbifolds for the sake of establishing notation for later. Every labeled polytope $\Delta$ can be written as a system of inequalities $\langle x, m_r \eta_r\rangle \leq \kappa_r$, where $\eta_r$ is a primitive vector normal to the $r$-th facet, and $m_r$ is a label associated to the $r$-th facet. Suppose that $\Delta$ has a total of $d$ facets. Consider a linear map $\beta \colon \mathbb{R}^d \to \mathbb{R}^n$, given by $\beta(e_i)=m_i\eta_i$. Denote by $\mathfrak{k}$ the kernel of $\beta$, and by $K$ the kernel of $\beta$ seen as a map from $\mathbb{T}^d$ to $\mathbb{T}^n$. Consider $\mathbb{C}^d$ with a standard action of $\mathbb{T}^d$ and a moment map $$\phi_0(z_1,\ldots, z_d)=\frac{1}{2}(|z_1|^2,\ldots, |z_d|^2)-
(\kappa_1,\ldots,\kappa_d).$$ Then $K$ acts on $\mathbb{C}^d$ with a moment map $\phi=\iota^* \circ \phi_0$, where $\iota$ is an inclusion $\mathfrak{k}\to \mathbb{R}^d$. The symplectic toric orbifold $(X_{\Delta},\omega_{\Delta})$ associated to the labeled rational polytope $\Delta$ is the symplectic reduction $X_{\Delta}=Z/K$, where $Z=\phi^{-1}(0)$.

\begin{rem}[cf. \cite{LT}]
The isotropy and orbifold structure groups of points on a toric orbifold $X$ can be calculated from the labeled polytope $\Delta$. Namely, for each point $p\in X$, denote by $\mathcal{F}(p)$ the set of facets of $\Delta$ containing $\mu(p)$. Then the isotropy group of $p$ is a subtorus $H_p\subset \mathbb{T}^n$, whose Lie algebra is generated by normals $\eta_r$ of facets in $\mathcal{F}(p)$. The orbifold structure group $\Gamma_p$ is isomorphic to $\Lambda_p/\widetilde{\Lambda_p}$, where $\Lambda_p$ is a lattice of circle subgroups in $H_p$ and $\widetilde{\Lambda_p}$ is a sublattice generated by $\{m_r\eta_r\}_{r\in \mathcal{F}(p)}$.
\end{rem}
We finish this section by recalling some similarities between symplectic toric orbifolds and complex toric orbifolds (treated as varieties). Firstly, let us point out that in the final section of \cite{LT} two fundamental results were proved:
\begin{tw} Every compact symplectic toric orbifold carries a complex structure which makes it into a K\"{a}hler orbifold. Moreover, it carries a structure of a complex toric orbifold with fan equal to the fan of its labeled polytope. 
\end{tw}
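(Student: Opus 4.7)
The plan is to extract the K\"ahler structure directly from the Delzant construction $X_\Delta=Z/K$ recalled above. The ambient space $\mathbb{C}^d$ carries its standard flat K\"ahler structure $(\omega_0,J_0,g_0)$, invariant under the $\mathbb{T}^d$-action whose moment map is $\phi_0$. The subgroup $K\subset\mathbb{T}^d$ therefore acts isometrically and holomorphically, and one forms the K\"ahler reduction at level zero. Simplicity of $\Delta$ guarantees that $K$ acts locally freely on $Z=\phi^{-1}(0)$, so $Z/K$ inherits an orbifold structure and the symplectic form $\omega_\Delta$ descends from $\omega_0$.

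To produce a compatible complex structure I would pass to the complexified action of $K_{\mathbb{C}}\subset(\mathbb{C}^*)^d$ on $\mathbb{C}^d$. A standard argument (essentially Kempf--Ness in the orbifold setting) identifies the saturation $U=K_{\mathbb{C}}\cdot Z$ with the complement in $\mathbb{C}^d$ of the union of those coordinate subspaces $\{z_{i_1}=\dots=z_{i_k}=0\}$ for which the corresponding facets of $\Delta$ have empty common intersection, and shows that the inclusion $Z\hookrightarrow U$ induces a diffeomorphism $Z/K\cong U/K_{\mathbb{C}}$. Since $K_{\mathbb{C}}$ acts holomorphically with finite stabilizers on $U$, the quotient $U/K_{\mathbb{C}}$ is a complex orbifold, and transporting its complex structure back along the diffeomorphism gives a $J_\Delta$ on $X_\Delta$ compatible with $\omega_\Delta$ (because $\omega_0$ and $J_0$ were compatible and both descend), making $(X_\Delta,\omega_\Delta,J_\Delta)$ a K\"ahler orbifold.

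For the fan assertion, the residual action of $(\mathbb{C}^*)^d/K_{\mathbb{C}}\cong(\mathbb{C}^*)^n$ on $U/K_{\mathbb{C}}$ admits an open dense orbit, so $U/K_{\mathbb{C}}$ is a complex toric orbifold. Its fan is dictated by the coordinate subspaces removed from $\mathbb{C}^d$: the image of the coordinate ray $\mathbb{R}_{\ge 0}\cdot e_r$ in $\mathbb{R}^d/\mathfrak{k}\cong \mathbb{R}^n$ is, by construction of $\beta$, precisely $\mathbb{R}_{\ge 0}\cdot m_r\eta_r$, and the higher-dimensional cones are determined by which subsets of facets of $\Delta$ share a common point. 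This is exactly the stacky fan associated to the labeled polytope $\Delta$. The main obstacle in executing this plan is the orbifold-level identification $Z/K\cong U/K_{\mathbb{C}}$, including the matching of charts and local groups: it requires a moment-map gradient flow argument showing that each $K_{\mathbb{C}}$-orbit in $U$ meets $Z$ in exactly one $K$-orbit, together with a stabilizer computation in which the labels $m_r$ recover precisely the orders of the local groups along the preimages of the facets.
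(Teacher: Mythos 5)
A preliminary remark on the comparison: the paper does not prove this statement at all --- it is quoted from the final section of Lerman--Tolman \cite{LT} --- so the only benchmark is their argument, which indeed proceeds essentially as you propose: K\"ahler reduction of the flat structure on $\mathbb{C}^d$ by $K$, identification of the symplectic quotient $Z/K$ with the complex quotient $\mathcal{U}/K_{\mathbb{C}}$, and reading off the fan from which coordinate subspaces are removed. So your route is the standard one. Note, however, that you yourself leave its analytic core --- that every $K_{\mathbb{C}}$-orbit in $\mathcal{U}$ meets $Z$ in exactly one $K$-orbit, with matching orbifold charts and local groups, so that the complex structure descends compatibly with $\omega_\Delta$ --- as an acknowledged ``main obstacle'' rather than an argument; as written this part is a plan, not a proof. (Also, the labels $m_r$ do not enter the fan, since $\mathbb{R}_{\ge 0}\cdot m_r\eta_r=\mathbb{R}_{\ge 0}\cdot\eta_r$; what you obtain is the normal fan of $\Delta$, which is exactly what the statement asks for, so the invocation of a ``stacky fan'' is an unnecessary overstatement.)

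The genuine gap is one of quantification. Everything in your proposal takes place on the Delzant model $X_\Delta=Z/K$, whereas the theorem asserts that \emph{every} compact symplectic toric orbifold carries such a K\"ahler and toric complex structure. To transfer the construction from the model to an arbitrary $(X,\omega)$ with its $\mathbb{T}^n$-action, you must know that $X$ is isomorphic, as a symplectic toric orbifold, to the model $X_\Delta$ built from its own labeled moment polytope (labels being the orders of the structure groups over the facet interiors). That is the uniqueness half of the orbifold Delzant correspondence, proved in \cite{LT} by local normal forms and a \v{C}ech-theoretic gluing argument; the version recalled in the paper (Theorem \ref{DelzantOrbifolds}) states only existence together with invariance under equivalence of labeled polytopes, so it does not supply this step either. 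Without explicitly invoking that classification, your argument equips $X_\Delta$ --- not the given orbifold $X$ --- with the K\"ahler structure and the toric-variety structure, and the statement as phrased is not yet proved.
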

\begin{tw} Two compact symplectic toric orbifolds are biholomorphic if and only if the fans of their corresponding labeled polytopes coincide.
\end{tw}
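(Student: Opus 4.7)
\medskip

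\noindent\textbf{Proof plan.} The plan is to reduce the statement to the classical classification of complex toric orbifolds by their fans, with the preceding theorem as the bridge: it endows each compact symplectic toric orbifold $X_i$ with a K\"{a}hler structure realizing it as a complex toric orbifold whose fan coincides with the fan of its labeled polytope. Thus both directions of the equivalence become assertions purely about the complex toric orbifolds attached to those fans.

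The implication ``equal fans implies biholomorphic'' is immediate because the complex toric orbifold assigned to a fan is built from it functorially. For the converse, suppose $\phi : X_1 \to X_2$ is a biholomorphism. First, I would argue that $\phi$ may, after composition with a translation by an element of the torus of $X_2$, be taken to be equivariant with respect to some automorphism $\lambda$ of $\mathbb{T}^n$: $\phi$ must preserve the orbifold singular stratification and therefore carry the open dense free orbit $(\mathbb{C}^{*})^n\subset X_1$ onto that of $X_2$, and transferring the $\mathbb{T}^n$-action through $\phi$ produces a second maximal compact torus in the identity component of the orbifold automorphism group of $X_2$, which must be conjugate to the original one. The resulting $\lambda$ is a lattice automorphism of $\mathbb{Z}^n$. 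Next, I would exploit the standard dictionary between the fan and the torus-invariant geometry: the rays of the fan are the primitive inward normals of the codimension-one torus-invariant suborbifolds, while the higher-dimensional cones record which tuples of such suborbifolds share a common torus fixed point. Since $\phi$ permutes torus-invariant suborbifolds and preserves their incidence pattern, $\lambda$ carries the fan of $X_1$ cone by cone onto the fan of $X_2$.

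The main obstacle lies in the first step, namely the equivariance reduction in the orbifold category. For smooth projective toric varieties this is classical, resting on the identification of $\mathbb{T}^n$ with a maximal compact torus in the identity component of the (linear algebraic) holomorphic automorphism group. The orbifold analogue requires verifying the analogous conjugacy of maximal compact tori inside the appropriate orbifold automorphism group, and ensuring that the singular strata introduce no new pathologies when linearizing $\phi$ near the torus fixed points. Once this equivariance is in hand, the matching of the two fans is a combinatorial consequence of the standard fan/variety dictionary recalled above.
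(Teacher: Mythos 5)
The paper does not prove this statement at all: it is quoted, together with the K\"{a}hler/toric-variety existence theorem preceding it, as one of the two results established in the final section of \cite{LT}, so there is no in-paper argument to compare your route against. Judged on its own, your outline follows the standard path (equal fans give biholomorphic toric orbifolds by functoriality; for the converse, reduce a biholomorphism to an equivariant one and read the fan off the torus-invariant geometry), but as written it has a genuine gap, and you name it yourself: the equivariance reduction. Everything hinges on knowing that the identity component of the biholomorphism group of a compact complex orbifold is a finite-dimensional Lie group and that the two $n$-tori (the original one and the one transported through $\phi$) are conjugate inside it. In the smooth projective variety case this is Demazure's description of the automorphism group; in the orbifold setting you would have to either prove an orbifold Bochner--Montgomery type statement and a conjugacy-of-maximal-compact-tori argument, or pass to the associated toric variety of the fan and invoke its (algebraic) automorphism group, taking care that the orbifold and variety automorphisms match. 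None of this is carried out, so the central implication ``biholomorphic $\Rightarrow$ same fan'' is not actually proved.

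A second, more local problem is your opening claim that $\phi$ ``must preserve the orbifold singular stratification and therefore carry the open dense free orbit onto that of $X_2$.'' This does not work: the open dense torus orbit is not characterized by the orbifold singular set (for $\mathbb{C}P^n$ the singular set is empty while the open orbit is a proper subset), and, as the paper stresses right after the theorem, codimension-one orbifold singularities coming from the labels are invisible to the analytic structure, so a biholomorphism need not respect the smooth-orbifold stratification at all --- this is precisely why only the fan, and not the labeled polytope, is a biholomorphism invariant. The free orbit is only recovered after you have the equivariance, not before, so this step is both incorrect as stated and logically redundant: the whole weight falls on the tori-conjugacy argument you deferred. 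The final combinatorial step (an equivariant biholomorphism induces a lattice isomorphism matching rays with torus-invariant divisors and cones with their intersections) is fine once equivariance is in hand.
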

It is important to note that the notions of analytic/algebraic complex orbifolds and smooth complex orbifolds do not coincide. This subtlety comes from the fact that $\mathbb{D}^2\slash\mathbb{Z}_p$ does not poses a singularity in the analytic sense. This means that (perhaps somewhat counter intuitively) an analytic complex orbifold admits more then one smooth complex orbifold structure (differing by the codimension $1$ singular sets). See Chapter $4$ of \cite{BG} for a thorough exposition comparing the complex analytic and smooth cases.  In the toric setting this is plainly visible since the data of codimension $1$ singularities corresponds to the labels which do not pass to the fan of the polytope. Motivated by these considerations we shall refer to the orbifold corresponding to the initial labeled polytope with all the labels replaced by $1$ as the {\it underlying complex orbifold} of the given symplectic toric orbifold. The reason for this is that this is the smooth complex orbifold structure such that its singular set coincides with the underlying analytic orbifold (and hence in a sense it is the smooth complex orbifold structure "closest" to the underlying complex analytic orbifold).
\newline\indent Finally, we note that by passing through the construction of the fan associated to the polytope and the corresponding toric variety we can recreate the underlying complex orbifold (albeit without a chosen symplectic structure) as $\mathcal{U}\slash K_{\mathbb{C}}$, where $K_{\mathbb{C}}$ is the complexification of $K$ (from the Delzant construction) and:
$$\mathcal{U}:=\{z\in\mathbb{C}^N\text{ }|\text{ } \bigcap\limits_{i\in I_z} F_i\neq\emptyset\},$$
for $I_Z$ equal to the set of indices $i\in\{1,...,N\}$ such that the $i$-th coordinate $z_i$ of $z$ vanishes.

\section{Labeled projective spaces}
In this section we recall the definition of labeled projective spaces which will become our main object of study in later sections. We follow \cite{AbreuToricOrbifolds} in our exposition with the slight exception that our definition of labeled projective space is more general (allowing different last conormal vector $\eta_{n+1}=\mathbf{v}$).

\begin{defi}
A labeled projective space $(SP^n_{\mathbf{v},\mathbf{w}},\omega_{\mathbf{v},\mathbf{w}})$ for a primitive vector $\mathbf{v}\in \mathbb{N}^n$ and $\mathbf{w}\in \mathbb{N}^{n+1}$ is a toric orbifold associated to a labeled simplex with normal vectors $\eta_i=-e_i$ for $1\leq i \leq n$, and $\eta_{n+1}=\mathbf{v}$ given by the inequalities:
$$\langle x,\eta_i \rangle\leq 0, \quad \text{for } i\in\{1,...,n\},$$
$$\langle x,\eta_{n+1}\rangle \leq 1,$$
with labels given by the set $\mathbf{w}$.
\end{defi}

In order, to better understand which orbifolds appear in the family of labeled projective spaces, we recall some subfamilies.
\begin{egg}[Weighted Projective Spaces]
Fix some $(n+1)$-tuple of positive integers $\lambda=(\lambda_0,\ldots , \lambda_n)$, which satisfy $\gcd(\lambda_0,\ldots,\lambda_n)=1$. Consider a weighted action of $\mathbb{C}^*$ on $\mathbb{C}^{n+1}\setminus \{0\}$ given by $t\cdot (z_0,\ldots, z_n)=(t^{\lambda_0}z_0,\ldots, t^{\lambda_n}z_n)$. Note that this is an almost free action, thus the quotient $(\mathbb{C}^{n+1}\setminus\{0\})/\mathbb{C}^*$ has an orbifold structure. We denote it by $W\mathbb{C}P^n_{\lambda}$ and call it a {\it weighted projective space}. Note that in the special case $\lambda=(1,\ldots,1)$ we get a regular complex projective space, and if $\lambda=(p,q)$ we get a $(p,q)$-football. The symplectic form on $W\mathbb{C}P^n_{\lambda}$ is defined in the same way as for the complex projective space. The action of a torus $\mathbb{T}^n$ on $W\mathbb{C}P^n_{\lambda}$ is given by the usual formula $$(t_1,\ldots,t_n)\cdot [z_0:\ldots:z_n]=[z_0:t_1^{-1}z_1:\ldots :t_n^{-1}z_n].$$ with the moment map 

$$\mu([z_0:\ldots:z_n])=\frac{1}{2}\left(\lambda_1\frac{|z_1|^{\frac{2}{\lambda_1}}}{\Vert z\Vert_{\lambda}^2},\ldots, \lambda_n\frac{|z_n|^{\frac{2}{\lambda_n}}}{\Vert z\Vert_{\lambda}^2}\right)$$

where $\Vert z \Vert_{\lambda}^2=|z_0|^{\frac{2}{\lambda_0}}+\ldots + |z_n|^{\frac{2}{\lambda_n}}$. The image of this moment map in $(\mathbb{R}^n)^*$ can be described by inequalities $\langle x, e_i \rangle \geq 0$ and $\langle x, (\lambda_1^{-1},\ldots,\lambda_n^{-1}) \rangle \leq \frac{1}{2}$. Thus the primitive normal vector to the slanted facet is given by $\eta_{n+1}=\text{lcm}(\lambda_1,\ldots,\lambda_n)\cdot(\lambda_1^{-1},\ldots,\lambda_n^{-1})$. Now the preimage of the interior of each facet $F_i$ of $\text{im }\mu$ is given by $$\mu^{-1}(\text{int } F_i)=\{[z_0:\ldots:z_n] \, | \, z_i=0 \text{ and } z_k\neq 0 \text{ for } k\neq i\}$$

the local orbifold group at each such point is isomorphic to $\mathbb{Z}_{l_i}$ (this is the stabilizer of the corresponding orbit in $\mathbb{C}^{n+1}$), where $l_i=\gcd(\lambda_0, \ldots, \Hat{\lambda_i}, \ldots,\lambda_n)$. Thus the set of labels is given by $\mathbf{w}=(l_0,\ldots,l_n)$. Hence, $W\mathbb{C}P^n_{\lambda}$ is indeed a labeled projective space. Notice that if, for example $\lambda_0=1$, all labels become $1$, but we still can get a non-trivial orbifold structure, since the smoothness condition on the polytope might be violated. 
\end{egg}

\begin{egg}[Orbifold Projective Spaces]
Let's first note that for each weighted projective space there is a natural map $\pi_{\lambda}\colon W\mathbb{C}P^n_{\lambda}\to \mathbb{C}P^n$ given by:

$$\pi_{\lambda}([z_0:\cdots:z_n])=[z_0^{\widehat{\lambda}_0}:\cdots:z_n^{\widehat{\lambda}_n}],$$
where $\widehat{\lambda}_i=\prod_{j\neq i} \lambda_j$. Let $\widehat{\lambda}=\prod_{j} \lambda_j$ and consider a group:

$$\Gamma_{\lambda}=(\mathbb{Z}_{\widehat{\lambda}_0}\times \cdots \times \mathbb{Z}_{\widehat{\lambda}_n})\slash \mathbb{Z}_{\widehat{\lambda}},$$
where $\mathbb{Z}_{\widehat{\lambda}}\hookrightarrow \mathbb{Z}_{\widehat{\lambda}_0}\times \cdots \times \mathbb{Z}_{\widehat{\lambda}_n}$ is given by:
$$\zeta \mapsto (\zeta^{\lambda_0},\cdots,\zeta^{\lambda_n}),$$

using the representation of cyclic group as roots of unity. The group $\Gamma_{\lambda}$ acts by multiplication on $W\mathbb{C}P^n_{\lambda}$. A simple calculation shows that $\pi_{\lambda}([z])=\pi_{\lambda}([z'])$ if and only if $[z]=[\eta]\cdot [z']$ for some $\eta\in \Gamma_{\lambda}$. Thus $\pi_{\lambda}$ factors as:
$$\begin{tikzcd}
W\mathbb{C}P^n_{\lambda} \arrow[rr, "{\pi_{\lambda}}"] \arrow[dr] & & \mathbb{C}P^n \\
& W\mathbb{C}P^n_{\lambda}\slash\Gamma_{\lambda} \arrow[ur,"{[\pi_{\lambda}]}"]
\end{tikzcd}$$
Since $\Gamma_{\lambda}$ is finite, the quotient $O\mathbb{C}P^n_{\lambda}:=W\mathbb{C}P^n_{\lambda}\slash\Gamma_{\lambda}$ has an orbifold structure. We call it an {\it orbifold projective space}. The map $[\pi_{\lambda}]$ is then a homeomorphism, but it is not an orbifold diffeomorphism, since $O\mathbb{C}P^n_{\lambda}$ has a non-trivial orbifold structure. In order to describe its orbifold groups we go back to the weighted action of $\mathbb{C}^*$ on $\mathbb{C}^{n+1}$ and consider its finite extension. 
\newline\indent Let $K^{\mathbb{C}}_{\lambda}$ be a complex Lie group defined by:
$$K^{\mathbb{C}}_{\lambda}=(\mathbb{Z}_{\widehat{\lambda}_0}\times \cdots \times \mathbb{Z}_{\widehat{\lambda}_n}\times \mathbb{C}^*) \slash\mathbb{Z}_{\widehat{\lambda}},$$
where:
$$\mathbb{Z}_{\widehat{\lambda}}\hookrightarrow \mathbb{Z}_{\widehat{\lambda}_0}\times \cdots \times \mathbb{Z}_{\widehat{\lambda}_n}\times \mathbb{C}^*,$$
is given by $\zeta \mapsto (\zeta^{\lambda_0},\cdots, \zeta^{\lambda_n},\zeta^{-1})$.
Then $K^{\mathbb{C}}_\lambda$ acts effectively on $\mathbb{C}^{n+1}$ by:
$$[\eta,t]\cdot (z_0,\cdots,z_n)=(\eta_0 t^{\lambda_0}z_0,\cdots, \eta_n t^{\lambda_n} z_n) \quad \text{for } \eta \in \mathbb{Z}_{\widehat{\lambda}_0}\times \cdots \times \mathbb{Z}_{\widehat{\lambda}_n}, \, t\in \mathbb{C}^*.$$

Because of the exact sequence: $$1\to \mathbb{C}^* \hookrightarrow K^{\mathbb{C}}_{\lambda} \to \Gamma_{\lambda} \to 1$$ we have that: $$(\mathbb{C}^{n+1}\setminus \{0\})/K^{\mathbb{C}}_\lambda \simeq W\mathbb{C}P^n_{\lambda}/\Gamma_{\lambda}=O\mathbb{C}P^n_{\lambda}.$$

Now the local orbifold groups on $O\mathbb{C}P^n_{\lambda}$ can be found from the isotropy groups of the $K^{\mathbb{C}}_{\lambda}$ action. Consider open projective hyperplanes in $O\mathbb{C}P^n_{\lambda}$:
$$H_i^{\circ}=\{[z_0:\cdots:z_n]\in O\mathbb{C}P^n_{\lambda} \, | \, z_i=0 \text{ and } z_k\neq 0 \text{ if } k\neq i\}.$$
\begin{lem}
The local orbifold group $\Gamma_z$ of $z\in H_i^{\circ}$ is isomorphic to $\mathbb{Z}_{\widehat{\lambda_i}}$.
\end{lem}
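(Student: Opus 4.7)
The approach is to read off $\Gamma_z$ directly from the presentation $O\mathbb{C}P^n_\lambda = (\mathbb{C}^{n+1}\setminus\{0\})/K^{\mathbb{C}}_\lambda$. Because the $\mathbb{C}^*$-part of $K^{\mathbb{C}}_\lambda$ acts almost freely on $\mathbb{C}^{n+1}\setminus\{0\}$ and $\Gamma_\lambda$ is finite, the short exact sequence $1\to\mathbb{C}^*\to K^{\mathbb{C}}_\lambda\to\Gamma_\lambda\to 1$ implies that the whole $K^{\mathbb{C}}_\lambda$-action is locally free, so $\Gamma_z$ is canonically isomorphic to the finite isotropy subgroup of any lift $\tilde z\in\mathbb{C}^{n+1}\setminus\{0\}$. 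I fix such a lift $\tilde z=(z_0,\dots,z_n)$ with $z_i=0$ and $z_k\neq 0$ for $k\neq i$.

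Next I would compute the stabilizer $S$ of $\tilde z$ inside $\widetilde G := \mathbb{Z}_{\widehat\lambda_0}\times\dots\times\mathbb{Z}_{\widehat\lambda_n}\times\mathbb{C}^*$, before passing to the quotient by $\mathbb{Z}_{\widehat\lambda}$. The equations $\eta_k t^{\lambda_k} z_k = z_k$ for $k\neq i$ force $\eta_k = t^{-\lambda_k}$, whereas the $i$-th coordinate vanishes and places no restriction on $\eta_i$. Combining $\eta_k\in\mathbb{Z}_{\widehat\lambda_k}$ with the identity $\lambda_k\widehat\lambda_k=\widehat\lambda$ then yields $t^{\widehat\lambda}=1$, so $t$ must be a $\widehat\lambda$-th root of unity while $\eta_i\in\mathbb{Z}_{\widehat\lambda_i}$ is free. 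This gives $|S|=\widehat\lambda\cdot\widehat\lambda_i$, and since $\mathbb{Z}_{\widehat\lambda}$ (embedded as in the paper) acts trivially on $\mathbb{C}^{n+1}$ it lies inside $S$, so $|\Gamma_z|=|S/\mathbb{Z}_{\widehat\lambda}|=\widehat\lambda_i$.

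To promote this count to an isomorphism I would define
$$\Phi\colon S\to\mathbb{Z}_{\widehat\lambda_i},\qquad \Phi(\eta,t)=\eta_i\, t^{\lambda_i},$$
which lands in $\mathbb{Z}_{\widehat\lambda_i}$ because $(\eta_i t^{\lambda_i})^{\widehat\lambda_i}=\eta_i^{\widehat\lambda_i}\,t^{\widehat\lambda}=1$. Multiplicativity of $\Phi$ is immediate, and surjectivity is clear from the freedom in $\eta_i$. The subgroup $\mathbb{Z}_{\widehat\lambda}\hookrightarrow\widetilde G$, embedded by $\zeta\mapsto(\zeta^{\lambda_0},\dots,\zeta^{\lambda_n},\zeta^{-1})$, lies in $\ker\Phi$ because $\zeta^{\lambda_i}\cdot(\zeta^{-1})^{\lambda_i}=1$, and comparing orders forces $\ker\Phi=\mathbb{Z}_{\widehat\lambda}$. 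Hence $\Phi$ descends to an isomorphism $\Gamma_z=S/\mathbb{Z}_{\widehat\lambda}\xrightarrow{\sim}\mathbb{Z}_{\widehat\lambda_i}$.

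The real content is the explicit stabilizer calculation together with the choice of the invariant $\eta_i t^{\lambda_i}$; the only mildly subtle point is the justification that local orbifold groups coincide with the $K^{\mathbb{C}}_\lambda$-isotropies, which reduces to the almost-freeness of the weighted $\mathbb{C}^*$-action noted above.
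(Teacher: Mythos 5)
Your argument is correct and follows essentially the same route as the paper: identify $\Gamma_z$ with the $K^{\mathbb{C}}_\lambda$-isotropy of a lift, solve $\eta_k=t^{-\lambda_k}$ for $k\neq i$ to see $t^{\widehat\lambda}=1$ with $\eta_i$ free, and descend the map $(\eta,t)\mapsto \eta_i t^{\lambda_i}$ to an isomorphism onto $\mathbb{Z}_{\widehat\lambda_i}$, which is exactly the paper's $[\eta_i,\zeta]\mapsto\eta_i\cdot\zeta^{\lambda_i}$. The only difference is that you spell out the order count and the justification (via local freeness) that orbifold structure groups coincide with isotropy groups, steps the paper leaves implicit.
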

\begin{proof}
The local orbifold group of $z=[z_0:\cdots:z_n]$ is the same as the isotropy group of $(z_0,\ldots,z_n)\in\mathbb{C}^{n+1}$ under the $K^{\mathbb{C}}_\lambda$. We have that $[\eta,t]\cdot z=z$ if and only if $\eta_k=t^{-\lambda_k}$ for all $k\neq i$. Since $\eta_k$ is $\widehat{\lambda}_k$-th root of unity, $t$ is $\widehat{\lambda}$-th root of unity. Thus we have that $$\Gamma_z\simeq (\mathbb{Z}_{\widehat{\lambda}_i}\times \mathbb{Z}_{\widehat{\lambda}})\slash ((\zeta^{\lambda_i},\zeta^{-1}), \, \zeta\in \mathbb{Z}_{\widehat{\lambda}})\simeq \mathbb{Z}_{\widehat{\lambda}_i}$$ where the last isomorphism is given by $[\eta_i,\zeta]\mapsto \eta_i\cdot \zeta^{\lambda_i}$.
\end{proof}
\end{egg}

\section{Toric orbifold coverings and bundles}
In this section we study Toric orbifold coverings. We start by noting that if $p:\tilde{X}\to X$ is an orbifold covering of a toric orbifold $X$ then $p^*\omega$ and the lifts of fundamental vector fields make $\tilde{X}$ into a symplectic orbifold with a Hamiltonian action of $\mathbb{R}^n$. Moreover, this lifts the torus action on $X$ to an action of $\mathbb{R}^n$ on $X$ and the two actions are connected via a morphism $\lambda:\mathbb{R}^n\to\mathbb{T}^n$ with respect to which $p$ is equivariant. In order to remain contained in the toric category we are interested in the particular case when the action of $\mathbb{R}^n$ on $\tilde{X}$ descends to an effective torus action. This justifies the following definition of toric orbifold coverings:
\begin{defi} A toric orbifold covering $(p,\lambda)$ consists of an orbifold covering $p:\tilde{X}\to X$ together with an epimorphism of Lie groups $\lambda:\mathbb{T}^n\to\mathbb{T}^n$ such that $(\tilde{X},\tilde{\omega})$ is a toric orbifold and $p$ is a $\lambda$-equivariant map (with respect to the torus actions) satisfying $p^*\omega=\tilde{\omega}$.
\end{defi}

It is easy to see that if $p: M\to N$ is a toric covering between compact connected toric manifolds then it is an isomorphism (since such manifolds are well known to be simply connected). This is not the case for orbifolds and orbifold coverings as illustrated by the following simple example.
\begin{egg} Consider $\mathbb{S}^2$ with its standard toric structure and take $\mathbb{Z}_2\subset\mathbb{S}^1$. Then $p:\mathbb{S}^2\to\mathbb{S}^2\slash\mathbb{Z}_2$ is a toric $2$-fold orbifold covering of $\mathbb{S}^2\slash\mathbb{Z}_2$ with the action of $\mathbb{S}^1\slash\mathbb{Z}_2\cong\mathbb{S}^1$.
\end{egg}
More generally, given a toric orbifold $X$ one can always construct a toric orbifold covering by choosing a discreet subgroup $G\subset\mathbb{T}^n$ and considering the quotient map $p: X\to X\slash G$ (equipped with the natural action of $\mathbb{T}^n\slash G\cong\mathbb{T}^n$). We shall now prove the main result of this section which asserts that all toric orbifold coverings are of the above form.
\begin{tw}\label{Covering} Let $p:\tilde{X}\to X$ be a toric orbifold covering of $X$ with $\tilde{X}$ connected. Let $G\subset\mathbb{T}^n$ denote the kernel of the corresponding endomorphism of $\mathbb{T}^n$. Then $X$ is isomorphic to $\tilde{X}\slash G$ via an isomorphism $\phi:X\to\tilde{X}\slash G$, such that $\phi\circ p=\pi$ (where $\pi$ denotes the quotient map).
\begin{proof} It is apparent that $p$ descents to a toric orbifold covering $\overline{p}:\tilde{X}\slash G\to X$ such that each orbit covers its image orbit exactly once. Hence, it suffices to prove that the preimage of an orbit $\mathbb{T}^nx$ through $\overline{p}$ consists of a single orbit (if so then $\overline{p}$ is an isomorphism and $\phi=\overline{p}^{-1}$).
\newline\indent Let $\tilde{\Delta}$ and $\Delta$ be the corresponding polytopes of $\tilde{X}\slash G$ and $X$ respectively. If $\mathbb{T}^nx$ corresponds to a boundary point of $\Delta$ then arbitrary close there is an orbit corresponding to an interior point of $\Delta$  such that its preimage via $p$ has the same number of connected components and hence without loss of generality we can assume that $\mathbb{T}^nx$ corresponds to an interior point of $\Delta$. Moreover, we note that for dimensional reasons the orbits in the interior $\tilde{\Delta}^{\circ}$ of $\tilde{\Delta}$ map precisely to orbits in the interior $\Delta^{\circ}$ of $\Delta$. Hence, since $\overline{p}$ is a cover such that each orbit covers its image orbit exactly once we conclude that the correspondence between orbits and points in polygons induce a covering $p_{\Delta}:\tilde{\Delta}^{\circ}\to \Delta^{\circ}$. However, due to convexity of these polytopes this covering has to be $1$-fold and so no two orbits corresponding to points in $\tilde{\Delta}^{\circ}$ map to the same orbit in $X$, which finishes the proof.
\end{proof}
\end{tw}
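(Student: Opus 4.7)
The plan is to factor $p$ through the quotient map $\pi:\tilde{X}\to\tilde{X}/G$, obtaining an induced map $\overline{p}:\tilde{X}/G\to X$, and then argue that $\overline{p}$ is an isomorphism of symplectic toric orbifolds. Since $G$ is by definition the kernel of the torus epimorphism $\lambda$ attached to $p$, the induced torus morphism for $\overline{p}$ is the identity $\mathbb{T}^n\to\mathbb{T}^n$. This already forces $\overline{p}$ to send each $\mathbb{T}^n$-orbit bijectively onto its image orbit, so the problem reduces to showing that $\overline{p}$ is a bijection between the orbit spaces.

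For the orbit-space statement I would pass to the moment polytope picture. Let $\tilde{\Delta}$ and $\Delta$ denote the moment polytopes of $\tilde{X}/G$ and $X$ respectively, normalized so that the moment map of $\tilde{X}/G$ agrees with the pullback of that of $X$. Since, by the convexity theorem, orbits are in bijection with points of the polytope, $\overline{p}$ descends to a continuous map $p_\Delta:\tilde{\Delta}\to\Delta$. The principal (free, $n$-dimensional) orbits correspond exactly to interior points of the polytope, so equivariance together with a dimension count forces $p_\Delta$ to send $\tilde{\Delta}^{\circ}$ into $\Delta^{\circ}$. On the regular stratum the orbifold structure is trivial and a neighborhood of each orbit is modeled on $\mathbb{T}^n$ times a small open set in $\mathbb{R}^n$; in such local models the covering structure of $\overline{p}$ descends to an honest topological covering $p_\Delta:\tilde{\Delta}^{\circ}\to\Delta^{\circ}$.

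The decisive step is that $\tilde{\Delta}^{\circ}$ and $\Delta^{\circ}$, being interiors of convex polytopes in $\mathbb{R}^n$, are simply connected, so any covering between them must be one-sheeted. This gives injectivity of $\overline{p}$ on the principal stratum. To handle orbits lying over $\partial\Delta$, I would use that the sheet count of a covering is locally constant: if a boundary orbit admitted two preimage orbits, then perturbing the moment value slightly into the interior would produce an interior orbit with two preimages, contradicting what we just established. Combined with surjectivity (which is immediate from $p$ being a covering), this yields that $\overline{p}$ is an orbit-space bijection, hence an isomorphism, and setting $\phi=\overline{p}^{-1}$ gives the required factorization $\phi\circ p=\pi$.

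The main obstacle I anticipate is making rigorous the statement that the covering structure of $\overline{p}$ really descends to a covering $\tilde{\Delta}^{\circ}\to\Delta^{\circ}$ of orbit spaces. This requires a local slice model near a regular point in which the action is a trivial $\mathbb{T}^n$-action on $\mathbb{T}^n\times V$ for some open $V\subset\mathbb{R}^n$, together with the observation that $\overline{p}$, being identity-equivariant, must act as the identity on the torus factor. Once this local picture is written down, the remainder of the argument is standard covering-space theory combined with elementary convex geometry.
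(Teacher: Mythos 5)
Your proposal is correct and follows essentially the same route as the paper: descend to $\overline{p}:\tilde{X}/G\to X$ with identity torus morphism, pass to the moment polytopes, observe that the induced map $\tilde{\Delta}^{\circ}\to\Delta^{\circ}$ is a covering of simply connected (convex) sets and hence one-sheeted, and handle boundary orbits by perturbing into the interior. The only difference is cosmetic ordering (the paper reduces to interior orbits first, you treat them last), and the "obstacle" you flag about descending the covering to the interior of the polytope is exactly the step the paper also asserts without further elaboration.
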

This phenomenon is useful in the context of the so called orbifold bundles (or $V$-bundles, see \cite{BG} for a more detailed exposition). A simple yet convincing argument for studying such objects is the fact that in general the tangent spaces of an orbifold do not form a vector bundle but an orbifold vector bundle since the fibers over singularities are in general quotients of vector spaces. For a general orbifold bundle we allow the isotropy group in the uniformization charts to act on the fibers which potentially makes the fibers over singularities, spaces finitely covered by the general fiber.
\begin{defi}\label{Uniformizing system} Let $\pi:E\to U$ be a continuous map between topological spaces. A symplectic uniformizing system for $E$ over $U$ is the following collection of data:
\begin{enumerate}
\item A symplectic orbifold chart $(V,G,\phi)$ for $U$.
\item A triple $(V\times F,G,\tilde{\phi})$, where $F$ is a symplectic orbifold, such that $G$ acts on $V\times F$ by $g(x,v)=(gx,\rho(x,g)v)$ for some smooth function $\rho:V\times G\to Symp(F)$ satisfying $\rho(gx,h)\rho(x,g)=\rho(x,gh)$ and $\tilde{\phi}:V\times F\to E$ is a $G$-invariant map inducing a diffeomorphism between $(V\times F)\slash G$ and $E$ (endowing the latter space with a symplectic structure).
\item If $\tilde{\pi}$ is the projection $V\times F\to V$ then $\phi\circ\tilde{\pi}=\pi\circ\tilde{\phi}$.
\end{enumerate}
\end{defi}
\begin{rem} We note that for later application we have slightly generalized the standard definition in order to allow regular fibers to be orbifolds (compare with \cite{BG}).
\end{rem}
We say that two symplectic uniformizing systems $(V,G,\phi)$ and $(V',G',\phi')$ for $E$ over $U$ are isomorphic if there is a group isomorphism $\lambda:G\to G'$ and a $\lambda$-equivariant diffeomorphism $\psi:V\times F\to V'\times F$ which is fiberwise a symplectomorphism and satisfies $\phi=\phi'\circ\psi$.
\begin{defi}\label{Orbifold bundle} Let $\pi: E\to X$ be a smooth map between symplectic orbifolds $E$ and $X$. A structure of a symplectic orbifold bundle with general fiber $F$ on $\pi: E\to X$ is given by assigning to each point $x\in X$ a uniformizing system on $\pi|_{\pi^{-1}(U_x)}:\pi^{-1}(U_x)\to U_x$, such that for each $y\in U_x$ there is a neighbourhood $U_{xy}\subset U_x\cap U_y$ such that the uniformization on $U_{xy}$ induced by the ones on $U_x$ and $U_y$ are isomorphic. We say that a symplectic orbifold bundle is a symplectic bundle of orbifolds if it admits an orbifold bundle structure such that the action of the isotropy groups on the fibers in the uniformizing charts is trivial.
\end{defi}
\begin{defi} We say that a symplectic orbifold bundle $\pi: E\to X$ with general fiber $F$ is toric if:
\begin{enumerate}
\item $F$, $E$ and $X$ are toric orbifolds of dimensions $2k$, $2n+2k$ and $2n$ respectively.
\item There is a short exact sequence:
$$\begin{tikzcd}
0\arrow[r]&{T}^{k}\arrow[r,"i"]&\mathbb{T}^{n+k}\arrow[r,"p"]&\mathbb{T}^{n}\arrow[r]&0,
\end{tikzcd}$$
such that the orbifold covering of fibers is equivariant with respect to $i$ and $\pi$ is equivariant with respect to $p$. We say that a toric orbifold bundle is a toric bundle of orbifolds if it is a symplectic bundle of orbifolds.
\end{enumerate}
\end{defi}
\begin{rem} In the above definition we mention the orbifold covering of fibers to underline the fact that the general fiber $F$ is not diffeomorphic to every fiber since it only covers the singular fibers.
In what follows we will adopt the standard abuse of language and refer to $F$ as the fiber of the orbifold bundle, while when treating the preimages of points we will speak of regular fibers or singular fibers (depending on whether the points considered are regular or singular).
\end{rem}
We are now ready to provide a description of a toric bundle of orbifolds.
\begin{defi}
Let $\widetilde{\Delta}=\bigcap_{i=1}^{\widetilde{N}} \{x\in \widetilde{\mathfrak{t}}^* \, | \, \langle \widetilde{\eta_i} , x \rangle \leq \widetilde{\kappa_i}\}$ and $\widehat{\Delta}=\bigcap_{i=1}^{\widehat{N}} \{x\in \widehat{\mathfrak{t}}^* \, | \, \langle \widehat{\eta_i} , x \rangle \leq \widehat{\kappa_i}\}$ be two standard representations of labeled polytopes. The labeled polytope $\Delta\subset \mathfrak{t}^*$ is called a labeled bundle polytope with fiber $\widetilde{\Delta}$ and base $\widehat{\Delta}$ if there is a short exact sequence $$0\to \widetilde{\mathfrak{t}} \xrightarrow{\iota} \mathfrak{t} \xrightarrow{\pi} \widehat{\mathfrak{t}} \to 0$$

so that 

\begin{itemize}
    \item $\Delta$ is combinatorialy equivalent to $\widetilde{\Delta}\times \widehat{\Delta}$.
    \item If $\widetilde{\eta_i}'$ is an outward conormal vector to the facet $\widetilde{F_i}'$ of $P$ corresponding to $\widetilde{F_i} \times \widehat{\Delta}\subset \widetilde{\Delta}\times \widehat{\Delta}$, then $\widetilde{\eta_i}'=\iota(\widetilde{\eta_i})$ for all $1\leq i \leq \widetilde{N}$.
       \item If $\widehat{\eta_i}'$ is an outward conormal vector to the facet $\widehat{F_i}'$ of $P$ corresponding to $\widetilde{\Delta} \times \widehat{F_i}\subset \widetilde{\Delta}\times \widehat{\Delta}$, then $\pi(\widehat{\eta_i}')=\widehat{\eta_i}$ for all $1\leq i \leq \widehat{N}$.
       \item The labels of the facets corresponding to $\widetilde{\Delta} \times \widehat{F_i}$ and $\widetilde{F_i} \times \widehat{\Delta}$ are equal to those of $\widehat{F_i}$ and $\widetilde{F_i}$ respectively.
\end{itemize}
\end{defi}
  By the same arguments as used in \cite{DmD3} and an elementary computation of the corresponding labels (or alternatively as a consequence of Theorem \ref{boclass} again together with a computation of labels) the following propositions is true:
  \begin{prop}\label{bclass} The labeled polytope of the total space $(E,\omega_E)$ of a toric bundle with fiber $(F,\omega_F)$ over $(X,\omega_X)$ is a toric bundle polytope over the labeled polytope of $X$ with the labeled polytope of $F$ as its fiber. Moreover, if the labeled polytope of a toric orbifold is a toric bundle polytope over the labeled polytope of $(X,\omega_X)$ with the labeled polytope of $(F,\omega_F)$ as its fiber then the orbifold is a total space of a toric bundle over $(X,\omega'_X)$ with fiber $(F,\omega'_F)$ for some choices of symplectic structure on $X$ and $F$ making them into symplectic toric manifolds.
  \end{prop}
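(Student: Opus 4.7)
The plan is to handle both implications by invoking the manifold version of the result from \cite{DmD3} to obtain the combinatorial and conormal structure, and then performing a separate computation of the labels using the remark after Theorem \ref{DelzantOrbifolds}. In both directions the short exact sequence of tori
$$0 \to \mathbb{T}^{k} \xrightarrow{i} \mathbb{T}^{n+k} \xrightarrow{p} \mathbb{T}^{n} \to 0$$
dualizes to the required SES of Lie algebras $0 \to \widetilde{\mathfrak{t}} \to \mathfrak{t} \to \widehat{\mathfrak{t}} \to 0$, and it is through this sequence that the fiber data and base data get separated.

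For the forward direction, let $\pi : E \to X$ be a toric bundle with fiber $F$. The equivariance of $\pi$ with respect to $p$ and of the fiber covering with respect to $i$ implies that the moment map $\mu_E$ has image combinatorially equivalent to $\widetilde{\Delta}\times\widehat{\Delta}$, and that facets of $\Delta_E$ fall into two classes: those lying over a facet of $\widehat{\Delta}$ under the projection induced by $p^*$, and those whose image under this projection is all of $\widehat{\Delta}$. The manifold argument of \cite{DmD3} applies verbatim to show that the conormals of the first class satisfy $p^*(\widehat{\eta}'_i)=\widehat{\eta}_i$ and those of the second class satisfy $\widetilde{\eta}'_i=\iota(\widetilde{\eta}_i)$, since this argument only uses the SES of tori and the equivariance. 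The label computation then proceeds as follows: by the remark following Theorem \ref{DelzantOrbifolds}, the label of a facet is the order of the orbifold structure group of any point in the interior of its preimage. For a fiber-type facet, such a point lies in a regular fiber isomorphic as a toric orbifold to $F$, so its local group coincides with the corresponding one in $F$. For a base-type facet, the isotropy of a point $e\in E$ over $\mu_X^{-1}(\widehat{F}_i^{\circ})$ projects under $p$ isomorphically onto the isotropy in $X$ (the kernel, which is in $\widetilde{\mathbb{T}}^k$, acts freely at such points by the fiber-type/base-type dichotomy), so the labels agree with those of $\widehat{\Delta}$.

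For the converse, start with a labeled bundle polytope $\Delta$ with SES as above and let $(X_\Delta,\omega_\Delta)$ denote its Delzant orbifold. The inclusion $\iota$ dualizes to a surjection of the ambient torus $\mathbb{T}^{n+k}$ onto $\widehat{\mathbb{T}}^n$; performing symplectic reduction of $X_\Delta$ with respect to $\widetilde{\mathbb{T}}^k=\ker p$ at a level inside $\widetilde{\Delta}$ (using the conormal conditions to guarantee that the reduction is transverse) yields a toric orbifold with moment polytope $\widehat{\Delta}$, hence isomorphic to $X_{\widehat{\Delta}}$ for a suitable symplectic form $\omega'_X$. This supplies a natural equivariant map $X_\Delta \to X_{\widehat{\Delta}}$, and by the same argument from \cite{DmD3} its general fiber is (non-canonically) isomorphic to $X_{\widetilde{\Delta}}$ with some form $\omega'_F$. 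The local uniformizing systems of Definition \ref{Uniformizing system} are then built from the local product structure coming from the Delzant construction, and the label matching established above ensures that the isotropy groups and their actions are exactly the expected ones.

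The main obstacle is the converse direction, and specifically verifying that the total space assembles into a bundle of orbifolds, i.e.\ that the isotropy of a singular point of $X_{\widehat{\Delta}}$ acts trivially on the fiber over it. This is where the splitting of conormals enforced by the bundle polytope condition is essential: because base-type conormals project to honest conormals of $\widehat{\Delta}$ and fiber-type conormals lie in $\iota(\widetilde{\mathfrak{t}})$, the isotropy subgroup of a base singularity lifts canonically to $\mathbb{T}^{n+k}$ without mixing with the $\widetilde{\mathbb{T}}^k$ direction, and hence acts trivially along fibers. Once this is checked, the compatibility of uniformizing systems between overlapping trivializations is routine.
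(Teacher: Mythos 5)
Your forward direction is essentially the route the paper intends: Proposition \ref{bclass} is justified there by the arguments of \cite{DmD3} together with an elementary label computation (or as a special case of Theorem \ref{boclass}), and your computation of the labels via the remark after Theorem \ref{DelzantOrbifolds} matches that. One caveat: for the base-type facets your identification of the label of $\Delta_E$ with that of $\widehat{F}_i$ is about orbifold structure groups, not action-isotropy groups, and it holds because the structure group $G$ of a uniformizing chart $(V\times F,G)$ acts trivially on the $F$-factor; you should say explicitly that you are using the bundle-of-orbifolds hypothesis here, since for a general toric orbifold bundle the label genuinely drops to a divisor (this is exactly the content of Theorem \ref{boclass}).

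The converse direction, however, has a genuine gap. Symplectic reduction of $X_\Delta$ by $\widetilde{\mathbb{T}}^k=\ker p$ at a level $\epsilon$ only produces the reduced space as a quotient of the single level set $\mu_{\widetilde{\mathbb{T}}^k}^{-1}(\epsilon)$; it does not furnish any map defined on all of $X_\Delta$, let alone an equivariant one. So the sentence ``this supplies a natural equivariant map $X_\Delta\to X_{\widehat{\Delta}}$'' is unsupported, and everything downstream (identification of the general fiber, construction of the uniformizing systems, triviality of the fiber action) presupposes that projection. This is precisely the hard part of the statement, and it is where the paper's proof of Theorem \ref{boclass} does real work: one writes $E\cong\mathcal{U}/K_{\mathbb{C}}$ with $\mathcal{U}=\widetilde{\mathcal{U}}\times\widehat{\mathcal{U}}$, uses the bundle-polytope conditions and the nine lemma to get $\widehat{K}\cong K/\widetilde{K}$, deduces $E\cong(F\times\widehat{\mathcal{U}})/\widehat{K}_{\mathbb{C}}$, and then the projection to $\widehat{\mathcal{U}}/\widehat{K}_{\mathbb{C}}\cong X$ is the bundle map, with the local uniformizing systems coming from slices for the $\widehat{K}_{\mathbb{C}}$-action. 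To repair your argument you should replace the reduction step by this quotient (or an equivalent fan-theoretic) construction; reduction can recover the base $X_{\widehat{\Delta}}$ abstractly, but not the bundle structure the proposition asserts.
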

  Similarly as in the manifold case the situation greatly simplifies when both the fiber and the base are rational simplices.
  So suppose we have that a Delzant polytope $\Delta$ is a bundle with base $\Delta_{k_1}$ and fiber $\Delta_{k_2}$ which are rational simplices. Now, since $\Delta$ is combinatorialy equivalent to $\Delta_{k_1}\times \Delta_{k_2}$, it has exactly $k_1+k_2+2$ facets, from which $k_1+1$ are of the form $F_i'=\Delta_{k_2} \times F_i$ and $k_2+1$ are of the form $G_i'=G_i\times \Delta_{k_1}$, where $F_i$ are the facets of $\Delta_{k_1}$ and $G_i$ are the facets of $\Delta_{k_2}$. Identify $\mathbb{R}^{k_1+k_2}=\mathbb{R}^{k_1} \times \mathbb{R}^{k_2}$. Then from the definition of a polytope bundle we can easily obtain normal vectors to faces $F_i'$, which are just $\eta_i=-e_i$ for $1\leq i\leq k_1$, and $\eta_{k_1+1}=\sum_{i=1}^{k_1} b_ie_i$ where $b_i$ are the linear coordinates of the conormal to the slanted face of the rational simplex $\Delta_{k_1}$. Now let's find the next $k_2$ vectors $\eta_{k_1+j+1}$ for $1\leq j\leq k_2$, which project onto the basis $\{-e_1,...,-e_{k_2}\}$ of $\mathbb{R}^{k_2}$. All facets corresponding to these normal vectors have a common adjacent vertex, which is the intersection $\bigcap_{i=1}^{k_1} F_i' \cap \bigcap_{j=1}^{k_2} G_j'$. The smoothness at that point now ensures that the first $k_1$ coordinates of $\eta_{k_1+j+1}$ are all $0$. Thus we get that $\eta_{k_1+j+1}=-e_{k_1+j+1}$ for $1\leq j \leq k_2$. Now the last normal vector $\eta_{k_1+k_2+2}$ must be of the form $(a_1,\ldots , a_{k_1}, b'_1, \ldots, b'_{k_2})$ for some integer $k_1$-tuple $(a_1,\ldots, a_{k_1})$, where $b'_j$ are the linear coordinates of the conormal to the slanted face of the rational simplex $\Delta_{k_2}$. Thus the bundle involving simplices is determined by one slanted face, which is given by an integer $k_1$-tuple $(a_1,\ldots,a_{k_1})$.
  \newline\indent We shall now study general toric orbifold bundles in order to give a similar description. Let us start by giving the following simple example which will guide our efforts. 
  \begin{egg} Let $X$ be $(\mathbb{S}^2\times\mathbb{S}^2)\slash\mathbb{Z}_2$ where $\mathbb{Z}_2$ acts by rotating both spheres by $\pi$ around an axis. By considering $\mathbb{S}^2$ with its standard toric structure we endow $X$ with the structure of a toric orbifold. Moreover, by splitting the second copy of $\mathbb{S}^2$ into hemispheres we can make $X$ into a toric orbifold $\mathbb{S}^2$-bundle over the $(2,2)$-football. We now consider how this affects the polytope. We start with the unit square $\tilde{\Delta}$ which is the polytope $\mathbb{S}^2\times\mathbb{S}^2$. Hence, the corresponding homomorphism of tori induces a linear map on the Lie algebra given in the basis $\{(1,1),(0,1)\}$ by doubling the first element. Writing this in the standard basis we get the matrices:
  $$
  A=\begin{bmatrix} 2&0\\1&1
  \end{bmatrix}\quad and \quad A^*=\begin{bmatrix} 2&1\\0&1
  \end{bmatrix}.
  $$
  By applying $(A^*)^{-1}$ to the vertices of the square we get the vertices of the polytope $\Delta$ of $X$. Namely, $(0,0)$, $(\frac12,0)$,$(-\frac12,1)$ and $(0,1)$.
  \end{egg}
 Comparing this to the trivial $\mathbb{S}^2$-bundle over a $(2,2)$-football we see that it exchanged the two facets with labels $2$ for two slanted facets without labels. Hence, it seems that the change needed in Definition \ref{bclass} is to change the labels to divisors $b_i$ of the original label $a_i$ of the basis and change condition $3)$ to $\pi(\widehat{\eta_i}')=\frac{a_i}{b_i}\widehat{\eta_i}$. Hence, we arrive at the following definition justified by the subsequent Theorem:
 \begin{defi}
Let $\widetilde{\Delta}=\bigcap_{i=1}^{\widetilde{N}} \{x\in \widetilde{\mathfrak{t}}^* \, | \, \langle \widetilde{\eta_i} , x \rangle \leq \widetilde{\kappa_i}\}$ and $\widehat{\Delta}=\bigcap_{i=1}^{\widehat{N}} \{x\in \widehat{\mathfrak{t}}^* \, | \, \langle \widehat{\eta_i} , x \rangle \leq \widehat{\kappa_i}\}$ be two standard representations of labeled polytopes. The labeled polytope $\Delta\subset \mathfrak{t}^*$ is called a labeled orbifold bundle polytope with fiber $\widetilde{\Delta}$ and base $\widehat{\Delta}$ if there is a short exact sequence $$0\to \widetilde{\mathfrak{t}} \xrightarrow{\iota} \mathfrak{t} \xrightarrow{\pi} \widehat{\mathfrak{t}} \to 0$$

so that 

\begin{itemize}
    \item $\Delta$ is combinatorialy equivalent to $\widetilde{\Delta}\times \widehat{\Delta}$.
    \item If $\widetilde{\eta_i}'$ is an outward conormal vector to the facet $\widetilde{F_i}'$ of $P$ corresponding to $\widetilde{F_i} \times \widehat{\Delta}\subset \widetilde{\Delta}\times \widehat{\Delta}$, then $\widetilde{\eta_i}'=\iota(\widetilde{\eta_i})$ for all $1\leq i \leq \widetilde{N}$.
       \item The labels of the facets corresponding to $\widetilde{F_i} \times \widehat{\Delta}$ are equal to those of $\widetilde{F_i}$.
       \item The labels $b_i$ of the facets corresponding to $\widetilde{\Delta} \times \widehat{F_i}$ are divisors of the labels $a_i$ of $\widehat{F_i}$.
              \item If $\widehat{\eta_i}'$ is an outward conormal vector to the facet $\widehat{F_i}'$ of $P$ corresponding to $\widetilde{\Delta} \times \widehat{F_i}\subset \widetilde{\Delta}\times \widehat{\Delta}$, then $\pi(\widehat{\eta_i}')=\frac{a_i}{b_i}\widehat{\eta_i}$ for all $1\leq i \leq \widehat{N}$.
\end{itemize}
\end{defi}
 \begin{tw}\label{boclass} The labeled polytope of the total space $(E,\omega_E)$ of a toric orbifold bundle with fiber $(F,\omega_F)$ over $(X,\omega_X)$ is a toric orbifold bundle polytope over the labeled polytope of $X$ with the labeled polytope of $F$ as its fiber. Moreover, if the labeled polytope of a toric orbifold is a toric orbifold bundle polytope over the labeled polytope of $(X,\omega_X)$ with the labeled polytope of $(F,\omega_F)$ as its fiber then the orbifold is a total space of a toric orbifold bundle over $(X,\omega'_X)$ with fiber $(F,\omega'_F)$ for some choices of symplectic structure on $X$ and $F$ making them into symplectic toric orbifolds.
 \begin{proof} Throughout the proof let us denote by $\Delta$, $\widetilde{\Delta}$ and $\widehat{\Delta}$ the labeled polytopes of $E$, $F$ and $X$ respectively.
 \newline\indent We start by proving that the labeled polytope of a toric orbifold bundle is a toric orbifold bundle polytope. Firstly, let us note that the exact sequence:
 $$0\to \widetilde{\mathfrak{t}} \xrightarrow{\iota} \mathfrak{t} \xrightarrow{\pi} \widehat{\mathfrak{t}} \to 0,$$
 arises naturally from the short exact sequence of torii from the definition of toric orbifold bundles.
 \newline\indent Secondly, we prove combinatorial equivalence. Note that on the open set $U$ of $X$ consisting of all the points with trivial isotropy group (of the action) the bundle $E|_{U}$ is trivial. This implies that the set of points in $E|_{U}$ fixed under the action $\mathbb{T}^k\subset\mathbb{T}^k\times\mathbb{T}^n$ (corresponding to $\iota(\tilde{\mathfrak{t}})\subset \mathfrak{t}$) has precisely $m$ connected components $\widetilde{U_i}$ (where $m$ is the number of fixed points on a generic fiber). Due to our description of toric coverings each singular fiber has the same number of fixed points as $F$. Hence, by studying the uniformizations around fixed points of $X$ we can infer that the closures $\overline{U}_i$ of $\widetilde{U_i}$ in $E$ are all pairwise disjoint. The combinatorial equivalence now follows by assigning to a vertex in $\widetilde{\Delta}\times\widehat{\Delta}$ corresponding to a point $(y,x)\in F\times X$ the vertex corresponding to the fixed point in the fiber of $E$ over $x$ which lies in the appropriate $\overline{U}_i$ (corresponding to $y$ via the chosen trivialization of $E|_{U}$).
 \newline\indent The second and third condition in the definition of toric orbifold bundle polytopes is easy to see by studying an orbifold chart around a point corresponding to a generic point in $\widetilde{F_i} \times \widehat{\Delta}$ such that it is a product of charts on $F$ and $X$. From this one can see that the corresponding orbifold isotropy group and action isotropy group has to be the same as for the fiber (since the projection of such a point is regular in $X$).
 \newline\indent For the final two conditions, let us consider a local uniformizing system for $E$ around a point $x\in X$ corresponding to a generic point in $\widehat{F_i}$:
\begin{center}
    \begin{tikzcd}
    V\times F \arrow[r,"\widetilde{\pi}"] \arrow[d,"\tilde{\phi}"] & V \arrow[d, "\phi"] \\
    V\times F\slash G \arrow[r,"\pi"] & V\slash G
    \end{tikzcd}
    \end{center}
 Firstly, let us note that since the conormal vector $\widehat{\eta_i}'$ corresponding to $\widetilde{\Delta} \times \widehat{F_i}$ is tangent to the kernel of the action restricted to the manifold correspodning to it then its image has to fix $\widehat{F_i}$. Consequently, $\pi(\widehat{\eta_i}')$ must be some multiplicity of $\widehat{\eta_i}$. Also from this uniformizing system we can see that $\mathbb{Z}_{b_i}$ which is the orbifold isotropy group at $x$ acts on the fiber over $x$. Then, the isotropy group of a generic point in the fiber over $x$ is precisely the kernel of this action on the fiber which is isomorphic to $\mathbb{Z}_{a_i}$ for some $a_i$ dividing $b_i$, which in turn implies that $a_i$ is the label of the corresponding facet of $\Delta$. Finally, by taking $\overline{\eta}_i$ to be the conormal vector corresponding to the facet in the uniformizing system $V\times F$ and using the fact that the projection of the uniformizing system onto $V$ gives a chart on $X$ we conclude that $\overline{\eta}_i$ corresponds via $\tilde\pi\circ\phi$ to $a_i\widehat{\eta}_i'$ in $\widehat{\Delta}$  while its projection to $V\times F\slash G$ corresponds to $b_i\widehat{\eta}_i$, which proves that $\pi(\widehat{\eta}_i')=\frac{b_i}{a_i}\widehat{\eta}_i$ since the projection in the uniformizing system and in the bundle commute with the uniformizing map and the chart.
 \newline\indent To prove the inverse implication we proceed similarly to Remark $5.2$ of \cite{DmD3}. For $\Delta$ (resp. $\widetilde{\Delta}$, $\widehat{\Delta}$) choose $\mathcal{U}$ (resp. $\widetilde{\mathcal{U}}$, $\widehat{\mathcal{U}}$) and $K$ (resp. $\widetilde{K}$, $\widehat{K}$) as in the orbifold Delzant construction so that $E\cong\mathcal{U}\slash K_{\mathbb{C}}$ (resp. $F\cong\widetilde{\mathcal{U}}\slash \widetilde{K_{\mathbb{C}}}$, $X\cong\widehat{\mathcal{U}}\slash \widehat{K_{\mathbb{C}}}$). By Identyfying $\mathbb{C}^{\widetilde{N}}\times\mathbb{C}^{\widehat{N}}$ with $\mathbb{C}^{\widetilde{N}+\widehat{N}}$ and using the combinatorial equivalence of $\widetilde{\Delta}\times\widehat{\Delta}$ with $\Delta$ we see that $\mathcal{U}=\widetilde{\mathcal{U}}\times\widehat{\mathcal{U}}$.
 \newline\indent We shall now prove that $\widehat{K}\cong K\slash\widetilde{K}$. Firstly, let us notice that since $\widetilde{\eta}_j'=\iota(\widetilde{\eta}_j)$ we have $\widetilde{K}=\mathbb{T}^{\widetilde{N}}\cap K\subset K\subset\mathbb{T}^{\widetilde{N}}\times\mathbb{T}^{\widehat{N}}$. Moreover, due to the commutativity of the bottom right square in the diagram below $K\subset \mathbb{T}^{\widetilde{N}}\times\mathbb{T}^{\widehat{N}}$ maps to $\widehat{K}\subset \mathbb{T}^{\widehat{N}}$ through the projection $\Pi_2$ onto the second factor. This allows us to consider the commutative diagram:
$$\begin{tikzcd}
&0\arrow[d]&0\arrow[d]&0\arrow[d]&\\
0\arrow[r]& \widetilde{K}\arrow[r]\arrow[d]&\mathbb{T}^{\widetilde{N}}\arrow[r,"\widetilde{p}"]\arrow[d,"I_1"]&\mathbb{T}^{\widetilde{n}}\arrow[r]\arrow[d,"\iota_1"]&0\\
0\arrow[r]& K\arrow[r]\arrow[d]&\mathbb{T}^{\widetilde{N}}\times\mathbb{T}^{\widehat{N}}\arrow[r,"p"]\arrow[d,"\Pi_2"]&\mathbb{T}^{\widetilde{n}}\times\mathbb{T}^{\widehat{n}}\arrow[r]\arrow[d,"\pi_2"]&0
\\0\arrow[r]&\widehat{K}\arrow[d]\arrow[r]&\mathbb{T}^{\widehat{N}}\arrow[d]\arrow[r,"\widehat{p}"]&\mathbb{T}^{\widehat{n}}\arrow[d]\arrow[r]&0\\
&0&0&0&,
\end{tikzcd}$$
where $\pi_2$ denotes the projection onto the second factor and $I_1$ and $\iota_1$ are respective inclusions of the first factor. The final non-zero arrow in each row is the map from the Delzant construction. Since we know that all the rows and all but the first column are exact we can use the nine lemma for abelian groups to prove the exactness of the first column.
\newline\indent From this we get:
$$E\cong\mathcal{U}\slash K_{\mathbb{C}}\cong (\widetilde{\mathcal{U}}\times\widehat{\mathcal{U}})\slash K_{\mathbb{C}}\cong ((\widetilde{\mathcal{U}}\slash \widetilde{K_{\mathbb{C}}})\times\widehat{\mathcal{U}})\slash (K_{\mathbb{C}}\slash\widetilde{K_{\mathbb{C}}})\cong (F\times\widehat{\mathcal{U}})\slash\widehat{K_{\mathbb{C}}}.$$

Using the Slice Theorem to the $\widehat{K}_{\mathbb{C}}$-action on $\widehat{\mathcal{U}}$ we get a slice $S$ through any point $\widehat{x}\in\widehat{\mathcal{U}}$. Due to the fact that $X\cong\widehat{\mathcal{U}}\slash\widehat{K_{\mathbb{C}}}$ is an orbifold of the same dimension as $S$, we get that the stabilizer group of $S$ is finite and consequently $F\times S$ provides the desired uniformization around $x\in X\cong\widehat{\mathcal{U}}\slash\widehat{K_{\mathbb{C}}}$ corresponding to $\widehat{x}$.
 \end{proof}
 \end{tw}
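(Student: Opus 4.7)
The plan is to prove both implications by reducing everything to the orbifold Delzant construction and using the description of local models at fixed points and generic facet points. I will denote by $\Delta$, $\widetilde{\Delta}$, $\widehat{\Delta}$ the labeled polytopes of $E$, $F$, $X$, and work with $E\cong\mathcal{U}/K_{\mathbb{C}}$ together with its analogues for $F$ and $X$.

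For the forward direction, the exact sequence of Lie algebras is immediate by differentiating the exact sequence of tori in the definition of toric orbifold bundle. For combinatorial equivalence, I would argue via fixed points: on the open locus $U\subset X$ where the action is free, the bundle is (symplectically) trivial, so the set of $\iota(\widetilde{\mathbb{T}})$-fixed points in $E|_U$ splits into components indexed by the fixed points of $F$. Using Theorem \ref{Covering} on each singular fiber (which is covered by $F$) and the uniformizing system around any fixed point of $X$, I would show that the closures of these components remain disjoint in $E$; this gives a bijection between vertices of $\Delta$ and pairs of vertices of $\widetilde{\Delta}\times\widehat{\Delta}$, which promotes to the required combinatorial equivalence after identifying edges. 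The conditions on the fiber conormals $\widetilde{\eta}_i'=\iota(\widetilde{\eta}_i)$ and their labels then follow by choosing a product chart around a generic point of $\widetilde{F_i}\times\widehat{\Delta}$ (the base point being regular).

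The main subtlety is the base facets: around a generic point $x\in\widehat{F}_i$ of label $a_i$, the uniformizing system $V\times F\to V$ has structure group $\mathbb{Z}_{a_i}$ acting on $F$. The subgroup fixing the relevant fiber facet descends to the isotropy of a generic point on that facet of $E$, which gives the label $b_i$ of $\widehat{F_i}'$; since the whole $\mathbb{Z}_{a_i}$ acts as a rotation around the axis corresponding to $\widehat{\eta}_i$, one deduces $b_i\mid a_i$ and the quotient $a_i/b_i$ is precisely the integer one must multiply $\widehat{\eta}_i$ by to obtain $\pi(\widehat{\eta}_i')$; this comparison is done by lifting $\widehat{\eta}_i'$ to the conormal $\overline{\eta}_i$ in the uniformizing chart and tracking it both through the base projection and through the quotient by $\mathbb{Z}_{b_i}$. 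I expect this bookkeeping of the two covering factors to be the most delicate step.

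For the reverse direction I would mimic Remark 5.2 of \cite{DmD3}. Using the Delzant construction for $\Delta$, $\widetilde{\Delta}$, $\widehat{\Delta}$ and the combinatorial equivalence of $\Delta$ with $\widetilde{\Delta}\times\widehat{\Delta}$, I get $\mathcal{U}=\widetilde{\mathcal{U}}\times\widehat{\mathcal{U}}\subset\mathbb{C}^{\widetilde{N}}\times\mathbb{C}^{\widehat{N}}$. The condition $\widetilde{\eta}_j'=\iota(\widetilde{\eta}_j)$ forces $\widetilde{K}=K\cap\mathbb{T}^{\widetilde{N}}$, while the relation $\pi(\widehat{\eta}_i')=(a_i/b_i)\widehat{\eta}_i$ ensures that the projection $\Pi_2$ of $K$ to $\mathbb{T}^{\widehat{N}}$ lands exactly in $\widehat{K}$; assembling this into the $3\times 3$ diagram of short exact sequences and applying the nine lemma gives a short exact sequence $0\to\widetilde{K}\to K\to\widehat{K}\to 0$. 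Then iterated quotients give
\[
E\cong(\widetilde{\mathcal{U}}\times\widehat{\mathcal{U}})/K_{\mathbb{C}}\cong(F\times\widehat{\mathcal{U}})/\widehat{K}_{\mathbb{C}},
\]
and applying the Slice Theorem to the $\widehat{K}_{\mathbb{C}}$-action on $\widehat{\mathcal{U}}$ produces the required uniformizing systems $F\times S$ around each $x\in X$. The group acting on each local fiber is a quotient of the $\widehat{K}_{\mathbb{C}}$-stabilizer, which is finite for dimensional reasons, so this indeed defines a toric orbifold bundle structure, with the symplectic structures $\omega'_X$ and $\omega'_F$ being those produced by the Delzant construction on $\widehat{\Delta}$ and $\widetilde{\Delta}$.
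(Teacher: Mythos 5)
Your proposal is correct and follows essentially the same route as the paper: the forward direction via fixed-point components over the free locus for combinatorial equivalence, product charts for the fiber facets, and a uniformizing system $V\times F\to V$ with the kernel-of-the-fiber-action argument for the base facet labels and conormals; the reverse direction via the Delzant construction, the identification $\mathcal{U}=\widetilde{\mathcal{U}}\times\widehat{\mathcal{U}}$, the nine lemma giving $0\to\widetilde{K}\to K\to\widehat{K}\to 0$, and the Slice Theorem producing the uniformizations $F\times S$. No substantive divergence from the paper's argument.
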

\section{Uniqueness of toric structures on labeled projective spaces}
We firstly show that the number of faces of a labeled polytope depends only on the topology of the corresponding orbifold. To this end let us first recall the following well known definition:
\begin{defi}
\label{fhvec}
The $f$-vector of a rational labeled polytope $\Delta$ is an integer vector composed of numbers $(f_0,\ldots,f_n)$, where $f_i$ is a number of $i$-dimensional faces of $\Delta$. We set $f_n=1$. The $f$-polynomial of $\Delta$ is given by $F(t)=\sum_i f_i t^i$.

Now in many cases it is more convenient to use another vector, called the $h$-vector of $\Delta$. Its polynomial is given by $H(t)=F(t-1)=\sum_i h_i t^i$, and we can calculate that:

$$h_k=\sum_{i\geq k} f_i (-1)^{i-k} {i\choose k}$$

On the other hand, since $F(t)=H(t+1)$, we can derive the following inverse formula: $$f_i=\sum_{k\geq i} h_k{k\choose i}$$
\end{defi}
\begin{prop}
\label{bettiorbifold}
Define the Betti numbers of an orbifold $X$ as $b_i:=\dim H_i(X,\mathbb{Q})$. Then if $X$ is a toric orbifold with labeled rational polytope $\Delta$, then $b_{2i}=h_i(\Delta)$ and $b_{2i+1}=0$ for all $i\in\mathbb{Z}$.
\begin{proof} The proof essentially follows its counterpart in the manifold case (see \cite{T}) using a generalization of known results from Morse theory to orbifolds (see \cite{LT}). First by \cite{LT}, Corollary 6.4 we obtain that around a fixed point $x$ there is a chart $(\widetilde{U},\Gamma_x,\phi)$, such that the smooth local lift of $\mu$ is given by $\widetilde{\mu}(x_1,y_1,\ldots,x_n,y_n)=\sum_i \lambda^{(i)}(x_i^2+y_i^2)$, where $\lambda^{(i)}$ are the primitive integer vectors generating the edges. Then by taking $X\in t$ which generates a dense subgroup of $\mathbb{T}^n$ we get a Morse function $\mu^X=\langle X,\mu \rangle$ (see also \cite{LT} Lemma 5.3) such that its critical points are precisely the fixed point of the action. Then using the above local description of $\mu$ as well as orbifold versions of Morse inequalities (\cite{LT} Theorem 4.7) we get that odd Betti numbers are zero, and $dim (C_{2k}(\mu^X))=b_{2k}(M)$ (where $C_{2k}(\mu^X)$ denotes the corresponding Morse complex). Now for every vertex $p$ of $\Delta$ define $\text{ind}_X(p)$ as a number of edges going down from that vertex (meaning that the linear functional $l_X(Y)=\langle Y,X \rangle$ is decreasing on those edges). Using that $l_X\circ\mu=\mu^X$ we get that the critical points of $\mu^X$ with index $2i$ correspond to vertices with index $i$ and hence:
$$b_{2i}(M)=B(X,i):=\# \{ p \, | \, \text{ind}_X(p)=i\}.$$
 
 Now note that each vertex $p$ of index $m$ is a maximum of $l_X$ restricted to some face of dimension $\leq m$. Now let's calculate the number $f_i$ the following way. For every $i$-dimensional face of $\Delta$, assign its vertex, on which $l_X$ is maximal. Then we will get a vertex of index $m\geq i$. But each such vertex has exactly $m \choose i$ $i$-faces going down from it. Hence in order to calculate $f_i$, we need to calculate the amount of vertices of index $m\geq i$ and each such vertex must be considered $m \choose i$ times. Thus we get the formula: $$f_i=\sum_{m\geq i} B(X,m) {m \choose i}.$$
 Comparing this formula to Definition \ref{fhvec}, we see that $B(X,m)=h_m$ (since the numbers satisfying the above property can be uniquely determined from the $f$-vector).
\end{proof}
\end{prop}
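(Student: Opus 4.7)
The plan is to mimic the classical manifold proof (going back to Danilov and developed in \cite{T}) by applying Morse theory to a generic component of the moment map, with the necessary orbifold refinements coming from \cite{LT}. The first step is to choose a vector $X\in\mathfrak{t}$ whose one-parameter subgroup is dense in $\mathbb{T}^n$, and to set $\mu^X=\langle X,\mu\rangle$. The zeros of the gradient-like vector field of $\mu^X$ are exactly the points fixed by the whole torus, and these in turn correspond bijectively to the vertices of $\Delta$ via the moment map.

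Next, I would invoke the local normal form from \cite{LT} near a fixed point $p$: in an orbifold chart $(\widetilde U,\Gamma_p,\phi)$ a lift of $\mu$ has the form $\widetilde\mu(x_1,y_1,\dots,x_n,y_n)=\sum_i\lambda^{(i)}(x_i^2+y_i^2)$, where the $\lambda^{(i)}$ are the primitive integer vectors along the edges of $\Delta$ emanating from the vertex $\mu(p)$. Consequently the Hessian of $\mu^X$ at $p$ is non-degenerate, with negative eigenspace of real dimension $2\,\mathrm{ind}_X(p)$, where $\mathrm{ind}_X(p)$ counts the edges at the vertex $\mu(p)$ along which $l_X(Y)=\langle Y,X\rangle$ decreases. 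In particular $\mu^X$ is a Morse function with only even-index critical points.

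I would then appeal to the orbifold Morse inequalities of \cite{LT} (Theorem 4.7). Because all indices are even, the Morse inequalities collapse to equalities, so odd Betti numbers vanish and $b_{2k}(X)=B(X,k):=\#\{p:\mathrm{ind}_X(p)=k\}$.

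Finally I would carry out the combinatorial identification $B(X,\cdot)=h(\Delta)$. For each $i$-face $F$ the restriction $l_X|_F$ achieves its maximum at a unique vertex $p(F)$ of $\Delta$, and conversely each vertex $p$ of index $m$ is reached in this way from exactly $\binom{m}{i}$ faces of dimension $i$ (one for each choice of $i$ descending edges at $p$). Summing gives $f_i=\sum_{m\ge i}B(X,m)\binom{m}{i}$, and comparison with Definition \ref{fhvec} forces $B(X,m)=h_m(\Delta)$ by uniqueness of the inversion. The main obstacle is simply making sure that the orbifold Morse-theoretic inputs (non-degeneracy of the Hessian in the orbifold chart, the Morse inequalities, and the identification of critical points with fixed points) are used in a manner consistent with the presence of the isotropy groups $\Gamma_p$; all three are already available from \cite{LT}, so once they are cited the combinatorial step is routine.
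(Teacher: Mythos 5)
Your proposal is correct and follows essentially the same route as the paper's proof: the generic component $\mu^X$ of the moment map, the local normal form and orbifold Morse inequalities from \cite{LT}, the vanishing of odd Betti numbers and the identity $b_{2k}=B(X,k)$, and the same counting argument $f_i=\sum_{m\geq i}B(X,m)\binom{m}{i}$ to conclude $B(X,m)=h_m(\Delta)$. No substantive differences to report.
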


\begin{cor}
\label{fvectors}
If $\Delta_0$ and $\Delta_1$ are two labeled polytopes of two toric structures on the same orbifold $(X,\omega)$, then the $f$-vector of $\Delta_0$ is the same as the $f$-vector of $\Delta_1$.
\begin{proof}
Note that the $f$-vectors of two polytopes are equal if and only if their $h$-vectors are equal. But by the above proposition the $h$-vector of the polytope depends only on topology of $M$, and not on the toric structure.
\end{proof}
\end{cor}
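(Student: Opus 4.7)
The plan is to deduce the corollary immediately from Proposition \ref{bettiorbifold}, with the only additional observation being that the $f$-vector and $h$-vector determine each other.

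First I would apply Proposition \ref{bettiorbifold} to each of the two toric structures on $(X,\omega)$ separately. This yields
\[
h_i(\Delta_0) \;=\; b_{2i}(X) \;=\; h_i(\Delta_1) \qquad \text{for all } i,
\]
because the Betti numbers $b_{2i}(X)$ depend only on the topology of the underlying space of $X$ and not on the chosen toric structure or the chosen symplectic form. (Note also that $b_{2i+1}(X) = 0$ by the same proposition, but that fact is not needed here.)

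Next I would invoke the elementary fact recorded in Definition \ref{fhvec}: the substitution $H(t)=F(t-1)$ is invertible, so the $f$-vector of a polytope can be recovered from its $h$-vector via
\[
f_i \;=\; \sum_{k\geq i} h_k \binom{k}{i}.
\]
Applying this formula to both $\Delta_0$ and $\Delta_1$ and using $h_i(\Delta_0)=h_i(\Delta_1)$ gives $f_i(\Delta_0)=f_i(\Delta_1)$ for every $i$, which is the desired conclusion.

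There is really no obstacle here; all the substance is contained in Proposition \ref{bettiorbifold}, and the corollary is a purely formal consequence of the bijective correspondence between $f$- and $h$-vectors. The only thing to be mindful of is that the statement compares two toric structures on the same orbifold-with-symplectic-form $(X,\omega)$, but in fact the proof shows more: any two labeled polytopes whose associated toric orbifolds are homeomorphic (in the underlying-space sense) already have equal $f$-vectors, since only the rational Betti numbers of the underlying space enter the argument.
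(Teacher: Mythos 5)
Your argument is correct and is essentially the paper's own proof: both reduce the claim to Proposition \ref{bettiorbifold} together with the invertible correspondence between $f$- and $h$-vectors from Definition \ref{fhvec}. Nothing further is needed.
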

Next we establish the uniqueness of toric structures on labeled projective spaces. To this end we will need the following well known fact:
\begin{lem}
\label{isotropygroups}
Suppose that $A\in M_n(\mathbb{Z})$ is a non-singular matrix with integer entries. Then $|\mathbb{Z}^n\slash A\mathbb{Z}^n|=|\det(A)|$.
\begin{proof}
Consider the Smith normal form $D=SAT^{-1}$, where $S,T\in SL_n(\mathbb{Z})$ and $D$ is diagonal. Then $\text{Im}(A)=\text{Im}(SA)=\text{Im}(TD)$. The last image is generated by the vectors $d_1v_1,\ldots,d_nv_n$, where $v_1,\ldots v_n$ are the vector entries of $T$, and thus form a basis, and $d_1,\ldots, d_n$ are the diagonal entries of $D$. So we have that $$|\mathbb{Z}^n/A\mathbb{Z}^n|=|\mathbb{Z}^n/\langle d_1v_1,\ldots,d_nv_n \rangle|=|\mathbb{Z}_{d_1}\oplus \cdots \oplus \mathbb{Z}_{d_n}|=|d_1\ldots d_n|=|\det(A)|.$$
\end{proof}
\end{lem}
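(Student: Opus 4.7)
The plan is to reduce to a diagonal matrix via changes of basis that preserve $\mathbb{Z}^n$, since the cardinality of the quotient $\mathbb{Z}^n/A\mathbb{Z}^n$ is invariant under such changes. First I would invoke the Smith normal form: any non-singular $A\in M_n(\mathbb{Z})$ admits a factorization $A=UDV$ with $U,V\in GL_n(\mathbb{Z})$ and $D=\operatorname{diag}(d_1,\dots,d_n)$ diagonal with positive integer entries (the elementary divisors). This is the main non-trivial input, but it is a standard result whose proof requires only the Euclidean algorithm applied simultaneously to rows and columns.

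Next I would observe that multiplication by $U$ (resp.\ $V$) on the left (resp.\ right) gives an automorphism of $\mathbb{Z}^n$, since both matrices have determinant $\pm 1$ and hence integer inverses. Consequently $A\mathbb{Z}^n = U(D(V\mathbb{Z}^n)) = UD\mathbb{Z}^n$, and the induced map $\mathbb{Z}^n/D\mathbb{Z}^n \xrightarrow{U} \mathbb{Z}^n/A\mathbb{Z}^n$ is a bijection. So it suffices to compute $|\mathbb{Z}^n/D\mathbb{Z}^n|$.

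The diagonal case is immediate: $D\mathbb{Z}^n = d_1\mathbb{Z}\oplus\cdots\oplus d_n\mathbb{Z}$, which gives
\[
\mathbb{Z}^n/D\mathbb{Z}^n \;\cong\; \mathbb{Z}/d_1\mathbb{Z}\oplus\cdots\oplus\mathbb{Z}/d_n\mathbb{Z},
\]
a finite group of order $d_1\cdots d_n$. Finally, from $A=UDV$ and multiplicativity of the determinant we get $|\det(A)| = |\det(U)|\cdot|\det(D)|\cdot|\det(V)| = d_1\cdots d_n$, which matches. The only real step requiring thought is the Smith normal form itself; everything else is bookkeeping, so I expect no genuine obstacle beyond citing (or briefly recalling) that factorization.
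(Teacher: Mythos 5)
Your proposal is correct and follows essentially the same route as the paper: both reduce to the diagonal case via the Smith normal form, identify the quotient with $\mathbb{Z}/d_1\mathbb{Z}\oplus\cdots\oplus\mathbb{Z}/d_n\mathbb{Z}$, and conclude using multiplicativity of the determinant and $|\det U|=|\det V|=1$. No gaps; your bookkeeping of how $U$ and $V$ act on $\mathbb{Z}^n$ is, if anything, slightly cleaner than the paper's.
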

With this we are ready to present the main result of this section.
\begin{tw}
\label{UniqueSP}
There is a unique toric structure on each labeled projective space $(\mathbb{S}P^n_{\mathbf{v},\mathbf{w}},\lambda\omega_{\mathbf{v},\mathbf{w}})$.
\begin{proof}
Suppose that $\Delta$ is a rational labeled polytope coming from some toric structure $(\mathbb{S}P^n_{\mathbf{v},\mathbf{w}},\lambda\omega_{\mathbf{v},\mathbf{w}})$. Then by Corollary \ref{fvectors} we have that $f_0(\Delta)=f_0(\Delta_n)=n+1$. This means that $\Delta$ is also a simplex. Since the isotropy groups of generic points corresponding to facets doesn't depend on the toric structure, we get that the labels of $\Delta$ have to coincide with $\mathbf{w}$. Now take the labels $(w_1,\ldots,w_{n})$ which corresponded to facets adjacent to $0$ in the original simplex. The vertex $p$ in $\Delta$ adjacent to $(F_1,\ldots,F_{n})$ must have the same isotropy group as the $0$ vertex for geometric reasons. By the remark after Theorem \ref{DelzantOrbifolds} this isotropy group is $\mathbb{Z}_{w_1}\times \ldots \times \mathbb{Z}_{w_{n}}$. Then by Lemma \ref{isotropygroups} we obtain that normal vectors $\eta_1,\ldots,\eta_{n}$ to $F_1,\ldots,F_{n}$ must form a $\mathbb{Z}$-basis of $\mathbb{Z}^n$, otherwise the orders of isotropy groups will not match. Then we can apply a $SL_n(\mathbb{Z})$ transformation to $\Delta$, to translate those normal vectors to the basis $-e_1,\ldots, -e_n$ of $\mathbb{Z}^n$. 
\newline\indent Now the last normal vector $\eta_{n+1}$ can also be computed by calculating isotropy groups. Take a vertex $p_i$ of $\Delta$ adjacent to all facets except for $F_i$. From the original simplex, the order of isotropy group at $p_i$ must be $$|\mathbb{Z}^n/\langle w_1e_1, \ldots , \widehat{e}_i,\ldots, w_ne_n, w_{n+1}\mathbf{v}\rangle|=\det( w_1e_1, \ldots , \widehat{e}_i,\ldots, w_ne_n, w_{n+1}\mathbf{v})=\prod_{j\neq i}w_j \cdot v_i$$ where $v_i$ is the $i$-th coordinate of $\mathbf{v}$. On the other hand if we calculate the isotropy group from $\Delta$ directly, we will get $\prod_{j\neq i}w_j \cdot (\eta_{n+1})_i$. This means that $i$-th coordinate of $\eta_{n+1}$ is equal to $v_i$. By taking $i$ from $1$ to $n$ we get that $\eta_{n+1}=\mathbf{v}$.
\end{proof}
\end{tw}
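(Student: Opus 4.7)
My approach is to extract enough combinatorial/geometric information from the topology and orbifold structure of $\mathbb{S}P^n_{\mathbf{v},\mathbf{w}}$ to pin down the polytope up to a lattice-preserving affine transformation (i.e.\ up to equivalence as a labeled polytope, which in view of Theorem \ref{DelzantOrbifolds} is exactly what uniqueness of the toric structure means). The strategy decomposes into: (i) identifying the combinatorial type of $\Delta$, (ii) identifying the labels, (iii) identifying the conormals up to a unimodular change of coordinates.

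\textbf{Step 1: $\Delta$ is a simplex with the right labels.} Suppose $\Delta$ comes from any toric structure on the same orbifold. Since the labeled projective space has labeled polytope an $n$-simplex, its $h$-vector is $(1,1,\ldots,1)$, whence by Corollary~\ref{fvectors} the $f$-vector of $\Delta$ matches, and in particular $f_0(\Delta)=n+1$. A simple $n$-polytope with $n+1$ vertices is an $n$-simplex. The bijection between facets of $\Delta$ and facets of the original polytope is forced by the combinatorics together with the requirement that the orbifold structure groups at generic points of each facet (which are $\mathbb{Z}/m_F\mathbb{Z}$ by Theorem~\ref{DelzantOrbifolds} and are a topological invariant of the orbifold, independent of the toric structure) coincide with the original labels $\mathbf{w}$.

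\textbf{Step 2: Normalizing the first $n$ conormals.} Fix the vertex $p$ adjacent to the $n$ facets $F_1,\ldots,F_n$ with labels $w_1,\ldots,w_n$ corresponding under the combinatorial equivalence to the $n$ facets meeting at the origin of the original simplex. By the remark after Theorem~\ref{DelzantOrbifolds}, the orbifold structure group at $p$ is $\mathbb{Z}^n/\langle w_1\eta_1,\ldots,w_n\eta_n\rangle$, and in the original polytope this equals $\mathbb{Z}_{w_1}\times\cdots\times\mathbb{Z}_{w_n}$ of order $\prod w_i$. Applying Lemma~\ref{isotropygroups} to the matrix with columns $w_i\eta_i$ shows $|\det(\eta_1,\ldots,\eta_n)|=1$, so $\eta_1,\ldots,\eta_n$ form a $\mathbb{Z}$-basis of $\mathbb{Z}^n$. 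Applying an $SL_n(\mathbb{Z})$ change of coordinates (which preserves the labeled polytope equivalence class and hence the isomorphism class of the associated toric orbifold) I can assume $\eta_i=-e_i$ for $i=1,\ldots,n$.

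\textbf{Step 3: Identifying $\eta_{n+1}$.} Let $\eta_{n+1}=(c_1,\ldots,c_n)$ be the remaining conormal. For each $i$, let $p_i$ be the vertex of $\Delta$ adjacent to every facet except $F_i$. The order of the structure group at $p_i$, computed from the original simplex with conormals $-e_1,\ldots,-e_n,\mathbf{v}$ and labels $w_1,\ldots,w_n,w_{n+1}$, equals the absolute value of the determinant of the matrix obtained by replacing the $i$-th column $w_ie_i$ by $w_{n+1}\mathbf{v}$; expanding along this column gives $w_{n+1}v_i\prod_{j\neq i}w_j$. On the other hand, computing the same structure group from $\Delta$ with the normalized conormals gives $w_{n+1}|c_i|\prod_{j\neq i}w_j$. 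Matching the two yields $|c_i|=v_i$, and the sign is fixed because $\eta_{n+1}$ is the \emph{outward} conormal to the facet opposite to $p$: just as in the original simplex, this forces $c_i=v_i$. Hence $\eta_{n+1}=\mathbf{v}$, so $\Delta$ is equivalent to the original labeled simplex and, by Theorem~\ref{DelzantOrbifolds}, the toric structure is unique.

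\textbf{Anticipated obstacle.} The delicate point is Step~3, where one must be careful that the determinant formula indeed gives the order of the structure group at $p_i$, and that the sign of $\eta_{n+1}$ is geometrically forced rather than only its absolute value. The remaining steps are essentially invariance of $h$-vector plus the standard isotropy-determinant calculation.
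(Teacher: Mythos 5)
Your proposal is correct and follows essentially the same route as the paper's own proof: invariance of the $f$-vector (Corollary \ref{fvectors}) to force $\Delta$ to be a simplex with labels $\mathbf{w}$, the vertex structure group together with Lemma \ref{isotropygroups} to normalize $\eta_1,\ldots,\eta_n$ to $-e_1,\ldots,-e_n$ by an $SL_n(\mathbb{Z})$ transformation, and the structure-group orders at the vertices $p_i$ to recover the coordinates of $\eta_{n+1}$. The only (harmless) difference is that you make explicit the sign/boundedness argument fixing $c_i=v_i$ rather than $|c_i|=v_i$, a point the paper passes over silently.
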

\section{Uniqueness of toric orbifold structures on products of labeled projective spaces}
We start by proving the following proposition which can be viewed as a generalization of Lemma 4.10 from \cite{DmD3}.
\begin{prop}\label{Sbundle} Let $X$ be a toric $2n$-orbifold such that the underlying symplectic orbifold is $(\mathbb{S}P^{k_1}_{\mathbf{v}_1,\mathbf{w}_1}\times\mathbb{S}P^{k_2}_{\mathbf{v}_2,\mathbf{w}_2},\lambda_1\omega_{\mathbf{v}_1,\mathbf{w}_1}+\lambda_2\omega_{\mathbf{v}_2,\mathbf{w}_2})$, where $\lambda_1,\lambda_2\in\mathbb{R}\backslash\{0\}$. Then $X$ is isomorphic (as a toric orbifold) to a toric bundle with fiber $\mathbb{S}P^{k_1}_{\mathbf{v}_1,\mathbf{w}_1}$ over $\mathbb{S}P^{k_2}_{\mathbf{v}_2,\mathbf{w}_2}$ or viceversa.
\begin{proof} Let $\Delta$ be the corresponding labeled polytope of the toric structure (more precisely of some moment map for the toric structure). The number of facets of $\Delta$ is given by $f_{n-1}(\Delta)=n+2$ and hence by Proposition 1.1.1 from \cite{T} $\Delta$ is combinatorially equivalent to a product of simplices. Moreover, the rational cohomology of the resulting space is $H^{\bullet}(\mathbb{S}P^{k_1}_{\mathbf{v}_1,\mathbf{w}_1},\mathbb{Q})\otimes H^{\bullet}(\mathbb{S}P^{k_2}_{\mathbf{v}_2,\mathbf{w}_2},\mathbb{Q})$ which is isomorphic (as vector spaces) to $H^{\bullet}(\mathbb{C}P^{k_1},\mathbb{Q})\otimes H^{\bullet}(\mathbb{C}P^{k_2},\mathbb{Q})$. Using the correspondence between the $h$-vector of $\Delta$ and the Betti numbers of $X$ we can conclude that it is in fact combinatorially equivalent to a product of a $k_1$-simplex and a $k_2$-simplex. More precisely, if it is equivalent to a product of a $l_1$-simplex and a $l_2$-simplex (with $l_1+l_2=k_1+k_2$ and $l_1\notin\{k_1,k_2\}$) then its cohomology is isomorphic (as graded vector spaces) to $H^{\bullet}(\mathbb{C}P^{l_1},\mathbb{Q})\otimes H^{\bullet}(\mathbb{C}P^{l_2},\mathbb{Q})$ which is not isomorphic to $H^{\bullet}(\mathbb{C}P^{k_1},\mathbb{Q})\otimes H^{\bullet}(\mathbb{C}P^{k_2},\mathbb{Q})$ which gives a contradiction.
\newline\indent The setup in the previous paragraph allows us to denote the facets of $\Delta$ by $F_1 , . . . , F_{k_1+1}$ and $F'_1, . . . , F'_{k_2+1}$, in such a way that $F_I \cap F'_J\neq \emptyset$ for any proper subsets $I \subset \{1, . . . , k_1 + 1\}$ and $J \subset \{1, . . . , k_2 + 1\}$. Denote the corresponding conormal vectors by $\eta_1 , . . . ,\eta_{k_1+1}$ and $\eta'_1 , . . . ,\eta'_{k_2+1}$. Given $i\in\{1, . . . , k_1 + 1\}$ and $j\in\{1, . . . , k_2 + 1\}$, let $I_i =\{1, . . . , k_1 + 1\}\backslash\{i\}$ and $J_j =\{1, . . . , k_2 + 1\}\backslash\{j\}$. Since $F_{I_{k_1+1}} \cap F_{J_{k_2+1}}\neq \emptyset$, the vectors $\eta_1 , . . . ,\eta_{k_1}, \eta'_1 , . . . ,\eta'_{k_2}$ form a basis for $\mathfrak{t}^{k_1+k_2}$. We can use this basis to write the remaining two conormal vectors:
$$\eta_{k_1+1}=\sum\limits_{i=1}^{k_1} a_i\eta_i+\sum\limits_{j=1}^{k_2} a'_j\eta'_j$$
$$\eta'_{k_2+1}=\sum\limits_{i=1}^{k_1} b_i\eta_i+\sum\limits_{j=1}^{k_2} b'_j\eta'_j$$
Given $i\in\{1, . . . , k_1 + 1\}$ and $j\in\{1, . . . , k_2 + 1\}$, let $A_{i,j}$ denote the matrix with columns $\eta_1,...,\eta_{i-1},\eta_{i+1},...,\eta_{k_1+1},\eta'_1,...,\eta'_{j-1},\eta'_{j+1},...,\eta'_{k_1+1}$.
\newline\indent Since $F_{I_i}\cap F'_{J_j}$ is not empty it defines a vertix $p_{i,j}$ with $|det(A_{i,j})|$ equal to the rank of the orbifold isotropy group at the corresponding fixed point $x_{i,j}$ in the underlying complex orbifold of $\mathbb{S}P^{k_1}_{\mathbf{v}_1,\mathbf{w}_1}\times\mathbb{S}P^{k_2}_{\mathbf{v}_2,\mathbf{w}_2}$. In particular, all $|det(A_{i,j})|$ depend only on the orbifold structure and not the toric structure. In particular, we can assume that $det(A_{k_1+1,k_2+1})=1$ (by changing the order of $\eta_i$ and $\eta'_j$ if necessary) and hence $\eta_1 , . . . ,\eta_{k_1}, \eta'_1 , . . . ,\eta'_{k_2}$ form a $\mathbb{Z}$-basis. Moreover, this assumption fixes the signs of all $det(A_{i,j})$ since moving along an edge from one vertex to the other (this changes one conormal vector) changes the sign of the determinant (assuming the new vector is put on the same position in the basis). Consequently, after such normalization each $det(A_{i,j})$ is an invariant of the orbifold itself.
\newline\indent On the other hand: $$det(A_{i,k_2+1})=(-1)^{k_1+1-i}a_idet(A_{k_1+1,k_2+1})=(-1)^{k_1+1-i}a_i$$ (via elementary operations). This implies that the coefficients $a_i$ again depend only on the orbifold structure (since all the other constants in the equation depend only on the orbifold structure). Similarily, by studying $det(A_{k_1+1,j})$ we conclude the same for $b'_j$.
\newline\indent Furthermore, by considering: $$det(A_{i,j})=(-1)^{k_1+k_2+2-i-j}(a_ib'_j-b_ia'_j)det(A_{k_1+1,k_2+1})=(-1)^{k_1+k_2+2-i-j}(a_ib'_j-b_ia'_j)$$ we again arrive at the conclusion that $b_ia'_j$ is dependent only on the orbifold structure. Hence, by comparison with the standard structure on $\mathbb{S}P^{k_1}_{\mathbf{v}_1,\mathbf{w}_1}\times\mathbb{S}P^{k_2}_{\mathbf{v}_2,\mathbf{w}_2}$ we get $b_ia'_j=0$ and consequently either all $b_i$ or all $a'_j$ are zero. This proves that in fact the structure given is isomorphic to a bundle of some $k_1$-simplex over a $k_2$-simplex (or viceversa).
\newline\indent We finish the proof by establishing these simplices via the following two observations:
\begin{enumerate}
    \item Since $\eta_1,...\eta_{k_1}$ form a $\mathbb{Z}$-basis for the space $V$ generated by them and by the fact that $a_i$ are invariants of the orbifold we get that $V\cap\Delta$ is in fact the simplex corresponding to the underlying complex orbifold of $\mathbb{S}P^{k_1}_{\mathbf{v}_1,\mathbf{w}_1}$. Same applies for $\eta'_1,...\eta'_{k_2}$ and $\mathbb{S}P^{k_2}_{\mathbf{v}_2,\mathbf{w}_2}$.
    \item The labels on the sides also have to stay the same since they can be computed from the orders of isotropy groups at the singular subspaces of complex codimension $1$.
\end{enumerate}
\end{proof}
\end{prop}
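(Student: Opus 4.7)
The plan is to work at the level of the labeled polytope $\Delta$ of the given toric structure and extract from its vertex isotropy data enough linear constraints on the conormal vectors to force a bundle decomposition.

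First, I would invoke Corollary \ref{fvectors} and Proposition \ref{bettiorbifold}: the $f$-vector of $\Delta$ is an orbifold invariant, and its $h$-vector agrees with the rational Betti numbers of $\mathbb{C}P^{k_1}\times\mathbb{C}P^{k_2}$ (computed via K\"unneth, since labeled projective spaces have the same rational cohomology as ordinary projective spaces). Reading off $f_{n-1}=n+2$ and invoking Proposition $1.1.1$ of \cite{T}, $\Delta$ is combinatorially a product of simplices, and matching graded Betti numbers rules out any splitting $\Delta_{l_1}\times\Delta_{l_2}$ with $\{l_1,l_2\}\neq\{k_1,k_2\}$. I would then label the facets $F_1,\dots,F_{k_1+1}$ and $F'_1,\dots,F'_{k_2+1}$ accordingly, choose a basis $\eta_1,\dots,\eta_{k_1},\eta'_1,\dots,\eta'_{k_2}$ of primitive conormals at one corner of $\Delta$, and expand the remaining two conormals $\eta_{k_1+1}$, $\eta'_{k_2+1}$ in this basis with coefficients $a_i,a'_j,b_i,b'_j$.

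The core of the argument is determinantal. For each pair $(i,j)$ the intersection $F_{I_i}\cap F'_{J_j}$ is a vertex $p_{i,j}$ whose orbifold isotropy group has order $|\det A_{i,j}|$, where $A_{i,j}$ is the matrix of conormals adjacent to $p_{i,j}$, by Lemma \ref{isotropygroups} together with the remark following Theorem \ref{DelzantOrbifolds}. These orders are intrinsic invariants of the underlying orbifold; after normalizing so that $\det A_{k_1+1,k_2+1}=1$ (the $\mathbb{Z}$-basis property forces this determinant to be $\pm 1$, and reordering the basis flips all signs coherently), each signed $\det A_{i,j}$ also becomes an orbifold invariant. A cofactor expansion then yields $\det A_{i,k_2+1}=\pm a_i$, $\det A_{k_1+1,j}=\pm b'_j$, and $\det A_{i,j}=\pm(a_ib'_j-b_ia'_j)$ for $i\le k_1,\ j\le k_2$. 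Comparing with the reference product structure on $\mathbb{S}P^{k_1}_{\mathbf{v}_1,\mathbf{w}_1}\times\mathbb{S}P^{k_2}_{\mathbf{v}_2,\mathbf{w}_2}$, where $b_i=a'_j=0$, forces $b_ia'_j=0$ for every $(i,j)$, so either all the $b_i$ or all the $a'_j$ vanish; either vanishing is precisely the bundle polytope condition.

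To close, I would verify that the fiber and base recovered this way are the claimed labeled simplices: the subspace spanned by $\eta_1,\dots,\eta_{k_1}$ together with the invariance of the $a_i$ reconstructs the conormals of $\mathbb{S}P^{k_1}_{\mathbf{v}_1,\mathbf{w}_1}$, and the facet labels are determined by the orders of isotropy groups on the codimension-one strata, which are again orbifold invariants. The main obstacle I anticipate is the sign bookkeeping in the determinantal step, since only the signed determinants satisfy the bilinear identity that yields $b_ia'_j=0$; one must check that the chosen orientation of the basis propagates coherently across every $A_{i,j}$. The rest of the argument is a fairly direct adaptation of the manifold reasoning in \cite{DmD3}.
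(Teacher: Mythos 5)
Your proposal is correct and follows essentially the same route as the paper: $f$/$h$-vector plus rational cohomology to pin down the combinatorial type as $\Delta_{k_1}\times\Delta_{k_2}$, then the determinantal invariance of vertex isotropy orders, normalization of $\det A_{k_1+1,k_2+1}$, cofactor identities giving $a_i$, $b'_j$, $a_ib'_j-b_ia'_j$ as invariants, comparison with the standard product to force $b_ia'_j=0$, and recovery of the fiber/base simplices and labels from codimension-one isotropy data. The only slight nuance is that the determinant at the normalizing vertex is $\pm1$ because the corresponding fixed point of the underlying complex orbifold of the product has trivial orbifold group (an invariant), and the $\mathbb{Z}$-basis property is the consequence rather than the cause; otherwise the arguments coincide.
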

Let us denote by $G_{\mathbf{w}}$ the subgroup of $\mathbb{T}^n$ such that $W\mathbb{C}P^n_{\mathbf{w}}\slash G_{\mathbf{w}}\cong O\mathbb{C}P^n_{\mathbf{w}}$. This group has the property that whenever $\mathbb{S}P^{k_1}_{\mathbf{v}_1,\mathbf{w}_1}$ has the same underlying complex orbifold as $\mathbb{C}P^n_{\mathbf{w}}$ then the underlying complex orbifold of $\mathbb{S}P^{n}_{\mathbf{v}_1,\mathbf{w}_1}\slash G_{\mathbf{w}}$ is simply $\mathbb{C}P^n$ since its polytope (disregarding the labels) is the standard simplex. The main idea behind the proof of uniqueness of toric structures on $\mathbb{S}P^{k_1}_{\mathbf{v}_1,\mathbf{w}_1}\times\mathbb{S}P^{k_2}_{\mathbf{v}_2,\mathbf{w}_2}$ is to consider an appropriate group $G_{\mathbf{\tilde{w}}_1,\mathbf{\tilde{w}}_2}:=G_{\mathbf{\tilde{w}}_1}\oplus G_{\mathbf{\tilde{w}}_2}$ so that (in the case of the standard structure) the underlying complex orbifold of $\mathbb{S}P^{k_1}_{\mathbf{v}_1,\mathbf{w}_1}\times\mathbb{S}P^{k_2}_{\mathbf{v}_2,\mathbf{w}_2}\slash G_{\mathbf{\tilde{w}}_1,\mathbf{\tilde{w}}_2}$ is $\mathbb{C}P^{k_1}\times\mathbb{C}P^{k_2}$, and study the quotient of an arbitrary toric orbifold $X$ which is symplectomorphc to $\mathbb{S}P^{k_1}_{\mathbf{v}_1,\mathbf{w}_1}\times\mathbb{S}P^{k_2}_{\mathbf{v}_2,\mathbf{w}_2}$ by this group.
\begin{lem}\label{Cbundle} Under the above notation $X\slash G_{\mathbf{\tilde{w}}_1,\mathbf{\tilde{w}}_2}$ is isomorphic to a toric bundle of $\mathbb{S}P^{k_1}_{\mathbf{v}_1,\mathbf{w}_1}\slash G_{\mathbf{\tilde{w}}_1}$ over $\mathbb{S}P^{k_2}_{\mathbf{v}_2,\mathbf{w}_2}\slash G_{\mathbf{\tilde{w}}_2}$ or vice versa.
\begin{proof} We start by applying Proposition $\ref{Sbundle}$ to $X$ to conclude that (without loss of generality) the resulting orbifold is isomorphic to a bundle of $\mathbb{S}P^{k_1}_{\mathbf{v}_1,\mathbf{w}_1}$ over $\mathbb{S}P^{k_2}_{\mathbf{v}_2,\mathbf{w}_2}$. Due to the form of each $G_{\mathbf{w}}$ the covering $\pi_{\mathbb{T}^{k_1+k_2}}:\mathbb{T}^{k_1+k_2}\to\mathbb{T}^{k_1+k_2}\slash G_{\mathbf{\tilde{w}}_1,\mathbf{\tilde{w}}_2}$ is simply given by a covering on each canonical circle. Hence, the resulting induced map on Lie algebras written in the canonical basis is just a diagonal matrix $diag(c_1,...,c_{k_1+k_2})$. From this we conclude that the polytope $\Delta_{\mathbf{w}_1,\mathbf{w}_2}$ of $X\slash G_{\mathbf{\tilde{w}}_1,\mathbf{\tilde{w}}_2}$ is given by rescaling for each $i\in\{1,...,k_1+k_2\}$ the direction of $e^*_i$ by $\frac{1}{c_i}$. Hence, we see that the result is also a bundle while due to the definition of $G_{\mathbf{\tilde{w}}_1,\mathbf{\tilde{w}}_2}$ the faces contained in $\mathfrak{t}^{k_1}$ and $\mathfrak{t}^{k_2}$ are precisely the standard simplices. Moreover, it is easy to see that the labels of these polytopes have to agree with the labels on the polytopes of $\mathbb{S}P^{k_1}_{\mathbf{v}_1,\mathbf{w}_1}\slash G_{\mathbf{\tilde{w}}_1}$ and $\mathbb{S}P^{k_2}_{\mathbf{v}_2,\mathbf{w}_2}\slash G_{\mathbf{\tilde{w}}_2}$ respectively.
\end{proof}
\end{lem}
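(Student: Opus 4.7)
The strategy is to leverage Proposition~\ref{Sbundle} to reduce to a situation where the bundle structure is already present on $X$ itself, and then to analyze how quotienting by $G_{\mathbf{\tilde{w}}_1,\mathbf{\tilde{w}}_2}$ interacts with this structure at the level of labeled polytopes. First, I would apply Proposition~\ref{Sbundle} to $X$ to conclude (up to swapping the roles of the two factors) that $X$ admits the structure of a toric orbifold bundle with fiber $\mathbb{S}P^{k_1}_{\mathbf{v}_1,\mathbf{w}_1}$ over base $\mathbb{S}P^{k_2}_{\mathbf{v}_2,\mathbf{w}_2}$. By Theorem~\ref{boclass}, the labeled polytope $\Delta_X$ of $X$ is then an orbifold bundle polytope over the simplex of $\mathbb{S}P^{k_2}_{\mathbf{v}_2,\mathbf{w}_2}$ with fiber the simplex of $\mathbb{S}P^{k_1}_{\mathbf{v}_1,\mathbf{w}_1}$, and I shall work with a decomposition $\mathfrak{t}^{k_1+k_2}\cong\mathfrak{t}^{k_1}\oplus\mathfrak{t}^{k_2}$ compatible with the short exact sequence of tori defining the bundle.

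Next, I would analyze the effect of the quotient by $G_{\mathbf{\tilde{w}}_1,\mathbf{\tilde{w}}_2}$ using Theorem~\ref{Covering}. Since $G_{\mathbf{\tilde{w}}_1,\mathbf{\tilde{w}}_2}=G_{\mathbf{\tilde{w}}_1}\oplus G_{\mathbf{\tilde{w}}_2}$ splits along the fiber/base decomposition, and since each $G_{\mathbf{\tilde{w}}_i}$ sits diagonally in its corresponding subtorus (because it is the kernel of the map realising $O\mathbb{C}P^{k_i}_{\mathbf{w}_i}$ as a quotient of $W\mathbb{C}P^{k_i}_{\mathbf{w}_i}$), the induced endomorphism of $\mathbb{T}^{k_1+k_2}$ becomes a diagonal rescaling $\mathrm{diag}(c_1,\dots,c_{k_1+k_2})$ in the standard basis adapted to the bundle splitting. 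The polytope $\Delta_{\mathbf{w}_1,\mathbf{w}_2}$ of $X/G_{\mathbf{\tilde{w}}_1,\mathbf{\tilde{w}}_2}$ is then obtained from $\Delta_X$ by applying the dual of this diagonal map — that is, by rescaling each coordinate axis $e^*_i$ by $1/c_i$.

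I would then check that $\Delta_{\mathbf{w}_1,\mathbf{w}_2}$ satisfies the four conditions of the definition of a labeled orbifold bundle polytope with fiber equal to the polytope of $\mathbb{S}P^{k_1}_{\mathbf{v}_1,\mathbf{w}_1}/G_{\mathbf{\tilde{w}}_1}$ and base equal to that of $\mathbb{S}P^{k_2}_{\mathbf{v}_2,\mathbf{w}_2}/G_{\mathbf{\tilde{w}}_2}$. Combinatorial equivalence to $\widetilde{\Delta}\times\widehat{\Delta}$ is inherited from $\Delta_X$ since the rescaling is a linear isomorphism; the subspaces $\mathfrak{t}^{k_1}$ and $\mathfrak{t}^{k_2}$ are preserved, and the faces lying in them become the simplices of $\mathbb{S}P^{k_i}_{\mathbf{v}_i,\mathbf{w}_i}/G_{\mathbf{\tilde{w}}_i}$ by the very defining property of $G_{\mathbf{\tilde{w}}_i}$. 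Finally, the labels on these faces have to match because they are determined by the orbifold isotropy groups on the codimension-$1$ singular strata, which are invariants of the orbifold $X/G_{\mathbf{\tilde{w}}_1,\mathbf{\tilde{w}}_2}$ regardless of which toric presentation we use. Applying Theorem~\ref{boclass} in the reverse direction to the resulting orbifold bundle polytope then yields the desired toric bundle structure.

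The main obstacle I anticipate is the bookkeeping of the quotient polytope, namely verifying that after the diagonal rescaling the last two conormal vectors (those of the ``slanted'' facets) and their labels assemble correctly into the orbifold bundle data of the definition, i.e.\ that the relation $\pi(\widehat{\eta}_i')=\tfrac{a_i}{b_i}\widehat{\eta}_i$ survives the rescaling with the correct new labels. This reduces to showing that the rescaling constants $c_i$ dictated by the structure of $G_{\mathbf{\tilde{w}}_i}$ are precisely those needed to convert the fiber-part data of $\Delta_X$ into the labeled polytope of $\mathbb{S}P^{k_1}_{\mathbf{v}_1,\mathbf{w}_1}/G_{\mathbf{\tilde{w}}_1}$ (and similarly for the base); once this identification is in place, the remaining verifications are straightforward consequences of the definitions.
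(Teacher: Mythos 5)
Your proposal follows essentially the same route as the paper's proof: apply Proposition~\ref{Sbundle} to get the bundle structure on $X$, observe that the quotient map of tori is diagonal in the canonical basis so that the polytope of $X\slash G_{\mathbf{\tilde{w}}_1,\mathbf{\tilde{w}}_2}$ is a coordinatewise rescaling of that of $X$, and then check that the rescaled polytope is a bundle polytope whose fiber and base faces are the simplices of the quotients with matching labels. The only difference is that you make explicit the appeal to Theorem~\ref{boclass} (in both directions) and the verification of the bundle-polytope conditions, which the paper leaves implicit; the substance is the same.
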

The next step in the proof is the following proposition. The proof is basically a slight repurposing of the ideas in cases 1-3 from section 2.4 of \cite{DmD}.
\begin{prop}\label{coho} Let $\mathbb{S}P^{k_1}_{\mathbf{v}_1,\mathbf{w}_1}$ and $\mathbb{S}P^{k_2}_{\mathbf{v}_2,\mathbf{w}_2}$ for $k_1\geq 1$ and $k_2\geq 2$ be such that the underlying complex orbifolds are $\mathbb{C}P^{k_1}$ and $\mathbb{C}P^{k_2}$ respectively. Moreover, let $X$ be a toric bundle with fiber $\mathbb{S}P^{k_1}_{\mathbf{v}_1,\mathbf{w}_1}$ over $\mathbb{S}P^{k_2}_{\mathbf{v}_2,\mathbf{w}_2}$ such that its cohomology ring contains an embedded (as rings) copy of $H^{\bullet}(\mathbb{C}P^{k_1}\times\mathbb{C}P^{k_2},\mathbb{Z})$ in such a way that:
\begin{enumerate}
    \item The embedding restricted to the second degree is an isomorphism of $\mathbb{Z}$-modules.
    \item The cohomology of $X$ is free of the same rank in each degree as that of $H^{\bullet}(\mathbb{C}P^{k_1}\times\mathbb{C}P^{k_2},\mathbb{Z})$.
\end{enumerate}Then $X$ is isomorphic to $\mathbb{S}P^{k_1}_{\mathbf{v}_1,\mathbf{w}_1}\times\mathbb{S}P^{k_2}_{\mathbf{v}_2,\mathbf{w}_2}$.
\begin{proof} Suppose that this bundle is not trivial, that is the vector $(a_1,\ldots, a_{k_1})$ defined in section $3$ is non-zero. Now as in the proof of Proposition 1.13 of \cite{DmD}, we can suppose that all $a_i\leq 0$. Consider a weighted Stanley-Reisner ring of the polytope $\Delta$ of $X$ (cf. \cite{B}). Since $\Delta$ is smooth, this is just the regular Stanley-Reisner ring of $\Delta$. Thus the cohomology of $H^*(X;\mathbb{Z})$ can be presented as $$H^*(X;\mathbb{Z})\simeq \mathbb{Z}[x_1,\ldots,x_{k_1+k_2+2}]\slash (\mathcal{I}+\mathcal{J})$$ where $\mathcal{I}$ gives us multiplicative relations $\prod_{i\in I} x_i=0$ for each set of indices, such that the intersection of facets $\cap_{i\in I}F_i$ is empty, and $\mathcal{J}$ gives us additive relations $\sum_i \langle \eta_i,e_j \rangle x_i=0$. Now denote $\alpha:=x_{k_1+k_2+2}$. Then from the linear relations for $k_1+1\leq j\leq k_1+k_2$ we get that $x_{k_1+2}=\ldots=x_{k_1+k_2+1}=\alpha$, and by the multiplicative relation we get that $\alpha^{k_2+1}=0$. Similarly for $1\leq j \leq k_1$ we get relations $$-x_j+x_{k_1+1}+a_j\alpha=0, \quad \prod_{j=1}^{k_1+1} x_j=0.$$

Define $\beta:=x_{k_1+1}$. Putting $a_{k_1+1}:=0$ we contemplate those relations into \begin{equation}
\label{relations}
    0=\prod_{i=1}^{k_1+1}(\beta+a_i\alpha)=\beta^{k_1+1}+\sigma_1\beta^{k_1}\alpha+\cdots+\sigma_{k_1}\beta\alpha^{k_1}
\end{equation} where $\sigma_i$ is the value of the $i$-th symmetric polynomial on $(a_1,\ldots,a_{k_1+1})$.

From the linear relations above we see that $\alpha$ and $\beta$ generate $H^2(X;\mathbb{Z})\simeq \mathbb{Z}^2$. The standard basis $(\alpha_0,\beta_0)$ of $H^2(X;\mathbb{Z})$ satisfies $\alpha_0^{k_2+1}=0$ and $\beta_0^{k_1+1}=0$ thus there is a matrix with integer entries $A,B,C,D$ with $AD-BC=1$, which translates $\alpha,\beta$ to the standard basis, so that $$(A\alpha+B\beta)^{k_2+1}=0=(C\alpha+D\beta)^{k_1+1}.$$

We split the consideration into the following cases

\begin{enumerate}
    \item[Case 1] $k_1>k_2$.
    
    In this case $H^{\bullet}(\mathbb{C}P^{k_1}\times\mathbb{C}P^{k_2},\mathbb{Z})\subset H^*(X)$ is freely generated by $\alpha$ and $\beta$ in degrees $\leq 2k_2$. In degree $2k_2+2$ there are two relations $$\alpha^{k_2+1}=0, \quad (A\alpha+B\beta)^{k_2+1}=0.$$ These two relations can't be independent, since otherwise the rank of $H^{2k_2+2}(X)$ would be $k_2$ instead of $k_2+1$. Thus $B=0$ and $A=\pm 1$ (we can suppose that $A=1$). Then $D=1$, so $(C\alpha+\beta)^{k_1+1}=0$. By the same argument this relation must be equivalent to (\ref{relations}) after substituting $\alpha^{k_2+1}=0$. By equating the coefficients we get $\sigma_1=C(k_1+1)$ and $\sigma_2=C^2\frac{k_1(k_1+1)}{2}$. Then $\sum_i a_i^2=\sigma_1^2-2\sigma_2=C^2(k_1+1)$. But by Cauchy-Schwartz $$C^2(k_1+1)^2=\sigma_1^2\leq k_1\sum_i a_i^2=k_1(k_1+1)C^2$$ - we get a contradiction.
    
    \item[Case 2] $k_1<k_2$.
    
    Here the equation $(C\alpha+D\beta)^{k_1+1}=0$ can't be independent with (\ref{relations}). But if $C\neq 0$, then this equation contains $\alpha^{k_1+1}$, while (\ref{relations}) doesn't. So $C=0$, and comparing coefficients we get $\sigma_1=\sum_i a_i=0$. But since $a_i\leq 0$ by construction, we get that all $a_i=0$, a contradiction.
    
    \item[Case 3] $k_1=k_2$.
    
    In this case we have all four of previously considered relations, all in the same degree $2k_1+2$. By rank argument, only two of them must be linearly independent. Since $\alpha^{k_1+1}=0$ is independent from (\ref{relations}), those two must imply $(A\alpha+B\beta)^{k_1+1}=0$ and $(C\alpha+D\beta)^{k_1+1}=0$. Without loss of generality suppose that $A,D\neq 0$. Then the equation $(C\alpha+D\beta)^{k_1+1}$ must be $D^{k_1+1}$ times the (\ref{relations}) by comparing coefficients. Thus as in the first case we get $D\sigma_1=(k_1+1)C$ and $D^2\sigma_2=\frac{k_1(k_1+1)}{2}C^2$, therefore we are in the same situation as in case 1 and we get a contradiction.
\end{enumerate}
\end{proof}
\end{prop}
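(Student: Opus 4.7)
The plan is to translate the cohomological hypothesis into polynomial constraints on the twisting data of the bundle and then to force that data to be zero. Since the underlying complex orbifolds of the fiber and base are $\mathbb{C}P^{k_1}$ and $\mathbb{C}P^{k_2}$, their labeled polytopes are standard simplices (with labels), and by Theorem \ref{boclass} combined with the simplex-over-simplex discussion preceding Proposition \ref{coho}, the labeled polytope $\Delta$ of $X$ is determined, beyond the labels inherited from fiber and base, by a single integer tuple $(a_1,\dots,a_{k_1})$ recording the slanted facet. The whole problem reduces to forcing $(a_1,\dots,a_{k_1})=0$: once that is known $\Delta$ is the product labeled polytope and $X\cong \mathbb{S}P^{k_1}_{\mathbf{v}_1,\mathbf{w}_1}\times\mathbb{S}P^{k_2}_{\mathbf{v}_2,\mathbf{w}_2}$ by the Delzant correspondence of Theorem \ref{DelzantOrbifolds}.

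To extract these constraints I would present $H^*(X;\mathbb{Z})$ via its Stanley-Reisner ring. Solving the linear relations attached to the conormals collapses the generators: one finds $x_{k_1+2}=\cdots=x_{k_1+k_2+1}=\alpha$ and $x_j=\beta+a_j\alpha$ for $1\le j\le k_1+1$ (with $a_{k_1+1}:=0$), where $\alpha:=x_{k_1+k_2+2}$ and $\beta:=x_{k_1+1}$. Coupled with the two Stanley-Reisner monomial relations coming from the missing intersections, this gives
$$H^*(X;\mathbb{Z})\cong \mathbb{Z}[\alpha,\beta]\big/\bigl(\alpha^{k_2+1},\;\prod_{i=1}^{k_1+1}(\beta+a_i\alpha)\bigr).$$
A change of lift $\beta\mapsto \beta+c\alpha$ lets me arrange all $a_i\le 0$, as in Proposition~1.13 of \cite{DmD}. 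The hypothesis that $H^\bullet(\mathbb{C}P^{k_1}\times\mathbb{C}P^{k_2};\mathbb{Z})\cong\mathbb{Z}[\alpha_0,\beta_0]/(\alpha_0^{k_2+1},\beta_0^{k_1+1})$ embeds as a subring, unimodularly in degree $2$, then identifies $\alpha_0=A\alpha+B\beta$ and $\beta_0=C\alpha+D\beta$ with $AD-BC=\pm 1$, and requires $(A\alpha+B\beta)^{k_2+1}=0=(C\alpha+D\beta)^{k_1+1}$ in $H^*(X;\mathbb{Z})$.

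From here the argument splits into three cases depending on the sign of $k_1-k_2$. When $k_1>k_2$, the relation $(A\alpha+B\beta)^{k_2+1}=0$ lives in a degree whose only available relation is $\alpha^{k_2+1}=0$; the rank hypothesis forces proportionality, hence $B=0$ and one may normalize $A=D=1$. Then $(C\alpha+\beta)^{k_1+1}$ must be proportional modulo $\alpha^{k_2+1}$ to the bundle relation, and matching elementary symmetric polynomials $\sigma_j$ of $(a_1,\dots,a_{k_1+1})$ yields $\sigma_1=C(k_1+1)$ and $\sigma_2=C^2k_1(k_1+1)/2$. The Newton identity $\sum a_i^2=\sigma_1^2-2\sigma_2$ combined with Cauchy-Schwarz $\sigma_1^2\le k_1\sum a_i^2$ then forces $C=0$, whence $\sigma_1=\sigma_2=0$ and (using $a_i\le 0$) all $a_i=0$. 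The case $k_1<k_2$ is shorter: the bundle relation has no $\alpha^{k_1+1}$ term, so $C=0$, after which $\sigma_1=\sum a_i=0$ with $a_i\le 0$ closes the case. The case $k_1=k_2$ is a hybrid in which all four relations live in the same graded piece of rank two; a rank-counting step isolates an independent pair and reduces matters to the Cauchy-Schwarz of Case~1.

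The main obstacle I anticipate is Case~3: with four relations concentrated in a single graded piece one must carefully decide which pair to treat as the independent generating pair and choose the normalization so that the Newton-Cauchy-Schwarz mechanism from Case~1 still applies, and there is a genuine risk of losing rank information under the wrong reduction. Everything else is relatively formal, and once $(a_1,\dots,a_{k_1})=0$ the Delzant correspondence finishes the proof.
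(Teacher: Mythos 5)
Your proposal is correct and follows essentially the same route as the paper: the Stanley--Reisner presentation $H^*(X;\mathbb{Z})\cong\mathbb{Z}[\alpha,\beta]/(\alpha^{k_2+1},\prod_i(\beta+a_i\alpha))$, the normalization $a_i\le 0$, the unimodular change of basis in degree $2$, and the three-case comparison of relations using Newton's identity and Cauchy--Schwarz to force $(a_1,\dots,a_{k_1})=0$, with Delzant finishing the argument. The only (harmless) differences are cosmetic: you spell out the concluding Delzant step and the $C=0$ subcase of Case 1 slightly more explicitly than the paper does.
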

With this we are finally ready to prove the main result of this section.
\begin{tw}\label{NoSoc} Let $X$ be a toric orbifold symplectomorphic to $\mathbb{S}P^{k_1}_{\mathbf{v}_1,\mathbf{w}_1}\times\mathbb{S}P^{k_2}_{\mathbf{v}_2,\mathbf{w}_2}$ endowed with the symplectic form $\lambda_1\omega_{\mathbf{v}_1,\mathbf{w}_1}+\lambda_2\omega_{\mathbf{v}_2,\mathbf{w}_2}$ for $k_1,k_2\geq 2$. Then $X$ is isomorphic to the standard toric structure on $\mathbb{S}P^{k_1}_{\mathbf{v}_1,\mathbf{w}_1}\times\mathbb{S}P^{k_2}_{\mathbf{v}_2,\mathbf{w}_2}$ with the given symplectic form.
\begin{proof} As before let $G_{\mathbf{\tilde{w}}_1,\mathbf{\tilde{w}}_2}:=G_{\mathbf{\tilde{w}}_1}\oplus G_{\mathbf{\tilde{w}}_2}$ be the group such that the underlying complex orbifold of $\mathbb{S}P^{k_i}_{\mathbf{v}_i,\mathbf{w}_i}\slash G_{\mathbf{\tilde{w}}_i}$ is a manifold. By Lemma \ref{Cbundle} $X\slash G_{\mathbf{\tilde{w}}_1,\mathbf{\tilde{w}}_2}$ is isomorphic to a toric bundle of $\mathbb{S}P^{k_1}_{\mathbf{v}_1,\mathbf{w}_1}\slash G_{\mathbf{\tilde{w}}_1}$ over $\mathbb{S}P^{k_2}_{\mathbf{v}_2,\mathbf{w}_2}\slash G_{\mathbf{\tilde{w}}_2}$.
\newline\indent The next step is to show that the cohomology ring of $X\slash G_{\mathbf{\tilde{w}}_1,\mathbf{\tilde{w}}_2}$ satisfies the assumptions of Proposition \ref{coho}. However, since $X\slash G_{\mathbf{\tilde{w}}_1,\mathbf{\tilde{w}}_2}$ is isomorphic to a toric bundle of $\mathbb{S}P^{k_1}_{\mathbf{v}_1,\mathbf{w}_1}\slash G_{\mathbf{\tilde{w}}_1}$ over $\mathbb{S}P^{k_2}_{\mathbf{v}_2,\mathbf{w}_2}\slash G_{\mathbf{\tilde{w}}_2}$ we can conclude that its cohomology as graded modules agrees with that of $\mathbb{C}P^{k_1}\times\mathbb{C}P^{k_2}$. Hence, we only need to show that the ring structure is the same and hence the second assumption is satisfied. Due to the rational Stanley-Reisner presentations of the cohomology rings of $X$ and $X\slash G_{\mathbf{\tilde{w}}_1,\mathbf{\tilde{w}}_2}$ it is easy to see that the mapping induced by the covering becomes an isomorphism in rational cohomology and hence a monomorphism in integer cohomology (since it is free). Hence, to prove the first assumption it suffices to show that we can choose a basis $(\alpha,\beta)$ of $H^{2}(X\slash G_{\mathbf{\tilde{w}}_1,\mathbf{\tilde{w}}_2},\mathbb{Z})$, such that:
$$\alpha^{k_2+1}=0=\beta^{k_1+1}.$$
Then the ring spammed by these elements would necessarily have to map monomorphicaly to a subring of $H^{\bullet}(X,\mathbb{Z})\cong H^{\bullet}(\mathbb{S}P^{k_1}_{\mathbf{v}_1,\mathbf{w}_1}\times\mathbb{S}P^{k_2}_{\mathbf{v}_2,\mathbf{w}_2},\mathbb{Z})$ generated by the multiples of its standard generators $\{A\tilde{\alpha},B\tilde{\beta}\}$ since these are the only elements satisfying $(A\tilde{\alpha})^{k_2+1}=0=(B\tilde{\beta})^{k_1+1}$, which proves that it is isomorphic to the cohomology ring of $\mathbb{C}P^{k_1+1}\times\mathbb{C}P^{k_2+1}$. To this end we proceed as follows:
\begin{enumerate}
    \item We use the Smith Normal Form Theorem to find a basis $\{v_1,v_2\}$ of $H^2(X\slash G_{\mathbf{\tilde{w}}_1,\mathbf{\tilde{w}}_2},\mathbb{Z})$ such that its image in $H^{2}(X,\mathbb{Z})$ consists of muliplicities $\{A_1w_1,A_2w_2\}$ for some non-zero integers $A_1$ and $A_2$ and a basis $\{w_1,w_2\}$.
    \item We find the $Sl(\mathbb{Z},2)$ transformation changing the basis $(w_1,w_2)$ to $\{\tilde{\alpha},\tilde{\beta}\}$ (we might have to change the sign of $\alpha$ in order for the transformation to be orientation preserving, this is of no consequence to the rest of the proof).
    \item We apply the inverse of this transformation to $\{v_1,v_2\}$ to gain the desired basis $\{\alpha,\beta\}$.
\end{enumerate}
Since as mentioned before the map induced by the covering maps $\{\alpha,\beta\}$ to $\{A\tilde{\alpha},B\tilde{\beta}\}$ and is a monomorphism, the elements $\{\alpha,\beta\}$ satisfy the desired relations. This implies due to Proposition \ref{coho} that $X\slash G_{\mathbf{\tilde{w}}_1,\mathbf{\tilde{w}}_2}$ is isomorphic to $(\mathbb{S}P^{k_1}_{\mathbf{v}_1,\mathbf{w}_1}\slash G_{\mathbf{\tilde{w}}_2})\times (\mathbb{S}P^{k_2}_{\mathbf{v}_2,\mathbf{w}_2}\slash G_{\mathbf{\tilde{w}}_2})$.
\newline\indent Finally, we note that since there is an isomorphism between $X\slash G_{\mathbf{\tilde{w}}_1,\mathbf{\tilde{w}}_2}$ and $(\mathbb{S}P^{k_1}_{\mathbf{v}_1,\mathbf{w}_1}\slash G_{\mathbf{\tilde{w}}_2})\times (\mathbb{S}P^{k_2}_{\mathbf{v}_2,\mathbf{w}_2}\slash G_{\mathbf{\tilde{w}}_2})$ due to the fact that $\Delta_{\mathbf{\tilde{w}}_1,\mathbf{\tilde{w}}_2}$ (the polytope of $X\slash  G_{\mathbf{\tilde{w}}_1,\mathbf{\tilde{w}}_2}$) is already a bundle of appropriate simplices the corresponding special linear transformation is the identity. Hence, $\Delta_{\mathbf{\tilde{w}}_1,\mathbf{\tilde{w}}_2}$ is just $\Delta_{k_1}\times\Delta_{k_2}$ and hence $\Delta$ (the polytope of $X$) is also a product of rational simplices of dimensions $k_1$ and $k_2$, since $\Delta_{\mathbf{\tilde{w}}_1,\mathbf{\tilde{w}}_2}$ arises form $\Delta$ by rescaling the canonical directions (as in the proof of Lemma \ref{Cbundle}).
\end{proof}
\end{tw}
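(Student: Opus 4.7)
The plan is to reduce the problem to the cohomological bundle-triviality criterion of Proposition~\ref{coho} by passing to a quotient in which the fiber and base have smooth projective spaces as their underlying complex orbifolds. First I would apply Proposition~\ref{Sbundle} to conclude that, up to swapping roles, $X$ is isomorphic to a toric bundle with fiber $\mathbb{S}P^{k_1}_{\mathbf{v}_1,\mathbf{w}_1}$ over $\mathbb{S}P^{k_2}_{\mathbf{v}_2,\mathbf{w}_2}$. Setting $G_{\mathbf{\tilde w}_1,\mathbf{\tilde w}_2}:=G_{\mathbf{\tilde w}_1}\oplus G_{\mathbf{\tilde w}_2}$ as in the discussion preceding Lemma~\ref{Cbundle}, that lemma applies and exhibits $X\slash G_{\mathbf{\tilde w}_1,\mathbf{\tilde w}_2}$ as a toric bundle of $\mathbb{S}P^{k_1}_{\mathbf{v}_1,\mathbf{w}_1}\slash G_{\mathbf{\tilde w}_1}$ over $\mathbb{S}P^{k_2}_{\mathbf{v}_2,\mathbf{w}_2}\slash G_{\mathbf{\tilde w}_2}$, both of whose underlying complex orbifolds are genuine projective spaces.

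The technical heart is verifying the hypotheses of Proposition~\ref{coho} for $X\slash G_{\mathbf{\tilde w}_1,\mathbf{\tilde w}_2}$. Rank agreement with $H^{\bullet}(\mathbb{C}P^{k_1}\times\mathbb{C}P^{k_2},\mathbb{Z})$ is immediate from Proposition~\ref{bettiorbifold}, since the $h$-vector of the quotient polytope is that of a product of simplices. For the ring embedding, the covering map $\pi:X\to X\slash G_{\mathbf{\tilde w}_1,\mathbf{\tilde w}_2}$ induces an isomorphism on rational cohomology (compare the rational Stanley--Reisner presentations) and hence, since integer cohomology is torsion-free, a monomorphism $\pi^{*}$ on integer cohomology. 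I would then use Smith Normal Form on the restriction of $\pi^{*}$ to degree $2$ to find a basis $\{v_1,v_2\}$ of $H^{2}(X\slash G_{\mathbf{\tilde w}_1,\mathbf{\tilde w}_2},\mathbb{Z})$ mapping diagonally to $\{A_1 w_1,A_2 w_2\}$, apply an $SL_2(\mathbb{Z})$ change of basis to align $\{w_1,w_2\}$ with the standard generators $\{\tilde\alpha,\tilde\beta\}$ of $H^{2}(X,\mathbb{Z})\cong H^{2}(\mathbb{S}P^{k_1}_{\mathbf{v}_1,\mathbf{w}_1}\times\mathbb{S}P^{k_2}_{\mathbf{v}_2,\mathbf{w}_2},\mathbb{Z})$, and transport this back to produce a basis $\{\alpha,\beta\}$ of $H^{2}(X\slash G_{\mathbf{\tilde w}_1,\mathbf{\tilde w}_2},\mathbb{Z})$ with $\pi^{*}\alpha=A\tilde\alpha$ and $\pi^{*}\beta=B\tilde\beta$ for some nonzero integers $A,B$. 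Injectivity of $\pi^{*}$ then forces $\alpha^{k_2+1}=0=\beta^{k_1+1}$, yielding the required embedded copy of $H^{\bullet}(\mathbb{C}P^{k_1}\times\mathbb{C}P^{k_2},\mathbb{Z})$.

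Having verified the hypotheses, Proposition~\ref{coho} gives an isomorphism $X\slash G_{\mathbf{\tilde w}_1,\mathbf{\tilde w}_2}\cong(\mathbb{S}P^{k_1}_{\mathbf{v}_1,\mathbf{w}_1}\slash G_{\mathbf{\tilde w}_1})\times(\mathbb{S}P^{k_2}_{\mathbf{v}_2,\mathbf{w}_2}\slash G_{\mathbf{\tilde w}_2})$. Its polytope is already a product of simplices, so no non-trivial $SL_n(\mathbb{Z})$ change is involved, and since by the proof of Lemma~\ref{Cbundle} this polytope arises from the polytope of $X$ by rescaling the canonical directions, the polytope of $X$ itself must be a product of rational simplices of dimensions $k_1$ and $k_2$. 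Combined with the uniqueness statement Theorem~\ref{UniqueSP} applied factor-by-factor, this identifies $X$ with the standard product. The hard part I anticipate is the degree-$2$ basis construction: one must verify that the integers coming from Smith Normal Form are precisely those required so that the pulled-back vanishing relations are exact identities rather than mere divisibility statements, which in turn rests on carefully tracking the rescaling data encoded by $G_{\mathbf{\tilde w}_1,\mathbf{\tilde w}_2}$ through the rational isomorphism $\pi^{*}\otimes\mathbb{Q}$.
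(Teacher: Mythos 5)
Your proposal follows essentially the same route as the paper: quotient by $G_{\mathbf{\tilde w}_1,\mathbf{\tilde w}_2}$, apply Lemma \ref{Cbundle}, verify the hypotheses of Proposition \ref{coho} via the rational Stanley--Reisner isomorphism, integral monomorphism, and Smith Normal Form plus $SL_2(\mathbb{Z})$ adjustment in degree $2$, and then transport the resulting product polytope back to $X$ through the rescaling of canonical directions. The only cosmetic differences (citing Proposition \ref{Sbundle} and \ref{bettiorbifold} explicitly, and closing with Theorem \ref{UniqueSP} factor-by-factor rather than the paper's terser identification of the product of rational simplices) do not change the argument.
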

\section{Products of labeled projective spaces with $(p,q)$-footbals}
In this section we continue the study of products of labeled projective spaces by studying the cases when at least one of the integers $k_1$ and $k_2$ is equal to $1$. This case is much more subtle as already for manifolds counterexamples to the uniqueness of toric structures are well known (see for example the discussion after Proposition 1.8 in \cite{DmD}). The purpose of this section is two fold:
\begin{enumerate}
    \item Prove that uniqueness holds for products of $(p,q)$-footbals with labeled projective spaces under some restrictions on $\lambda_1$ and $\lambda_2$ (analogously as is done in \cite{DmD})
    \item Prove that uniqueness for toric structures on the product holds under certain restrictions on the singular set regardless of $\lambda_1$ and $\lambda_2$.
\end{enumerate}
In particular, achievement of the second purpose shows that (contrary to the impression one might have by the contents of the previous section) the number of toric structures is in fact influenced by the labels.
For both these goals the following Lemma (adapted to orbifolds from \cite{DmD} section $2.4$ case $4$) is crucial:
\begin{lem}\label{foot} Let $\pi:X\to\mathbb{S}P^{1}_{\mathbf{v}_1,\mathbf{w}_1}$ be a toric $\mathbb{S}P^{k}_{\mathbf{v}_1,\mathbf{w}_1}$-bundle over $\mathbb{S}P^{1}_{\mathbf{v}_1,\mathbf{w}_1}$ symplectomorphic to $\mathbb{S}P^{1}_{\mathbf{v}_1,\mathbf{w}_1}\times\mathbb{S}P^{k}_{\mathbf{v}_2,\mathbf{w}_2}$ considered with the symplectic form $\lambda_1\omega_{\mathbf{v}_1,\mathbf{w}_1}+\lambda_2\omega_{\mathbf{v}_2,\mathbf{w}_2}$. Moreover, let us assume that one of the edges of the corresponding labeled bundle polytope corresponds to a section $S$ of the bundle with trivial normal bundle (more precisely it is a product of $\mathbb{S}P^{1}_{\mathbf{v}_1,\mathbf{w}_1}$ and the tangent space to the corresponding fixed point in the fiber). Then $X$ is isomorphic to $\mathbb{S}P^{1}_{\mathbf{v}_1,\mathbf{w}_1}\times\mathbb{S}P^{k}_{\mathbf{v}_2,\mathbf{w}_2}$.
\begin{proof} Let $x_0$ be a fixed point of the action contained in $S$. Let us consider $X\slash G_{\mathbf{\tilde{w}}_2}$ for an appropriate $G_{\mathbf{\tilde{w}}_2}\subset\mathbb{T}^k$ (such that the underlying complex orbifold of $\mathbb{S}P^{k}_{\mathbf{v}_2,\mathbf{w}_2}\slash G_{\mathbf{\tilde{w}}_2}$ is $\mathbb{C}P^k$). Moreover, without loss of generality we assume that the polytope of $\mathbb{S}P^{k}_{\mathbf{v}_2,\mathbf{w}_2}\slash G_{\mathbf{\tilde{w}}_2}$ is noramlized at $\pi(x_0)$. By our assumptions, the normal bundle to $\pi(S)$ is a trivial bundle with fiber equal to the tangent cone over $x_0$. Moreover, this bundle splits into trivial complex line orbifold bundles $L_1,...,L_k$ corresponding to $e_1,...,e_k$ (i.e. $L_i$ is fixed by the torus generated by $\{e_1,...,e_{i-1},e_{i+1},...,e_k\}$). In other words $L_i$ are the tubular neighbourhoods of $S$ in the suborbifolds corresponding to the $2$-dimensional faces of the labeled polytope. By comparing these neighbourhoods to their counterparts in the underlying complex orbifolds we see that the vector bundles $\tilde{L}_i$ defined analogously to $L_i$ are all trivial (otherwise an arbitrary section of these bundles would have to intersect $S$ which is a contradiction with the triviality of $L_i$). From this we conclude (similarly as in \cite{DmD}) that each $a_i$ in the description of the bundle via a slanted face vanishes since (due to normalization at $x_0$) they are precisely the Chern classes of $\tilde{L}_i$. This implies that the considered toric structure on $(\mathbb{S}P^{1}_{\mathbf{v}_1,\mathbf{w}_1}\times \mathbb{S}P^{k}_{\mathbf{v}_2,\mathbf{w}_2})\slash G_{\mathbf{\tilde{w}}_2}$ is the standard one.
\newline\indent The proof is concluded by passing to the covering space similarly as in the proof of Theorem \ref{Soc1}.
\end{proof}
\end{lem}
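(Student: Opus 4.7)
The plan is to reduce the general orbifold bundle to a situation where the fiber is the underlying complex manifold $\mathbb{C}P^k$, deduce triviality of the bundle there by characteristic class arguments, and then lift the resulting splitting back to $X$ via the covering theory developed in Section 4.

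First, I would introduce an appropriate finite subgroup $G_{\mathbf{\tilde{w}}_2}\subset\mathbb{T}^k$ whose quotient on the fiber turns $\mathbb{S}P^k_{\mathbf{v}_2,\mathbf{w}_2}\slash G_{\mathbf{\tilde{w}}_2}$ into a toric orbifold whose underlying complex orbifold is $\mathbb{C}P^k$ (the same device used in Lemma \ref{Cbundle}). Applying this group fiberwise, $X\slash G_{\mathbf{\tilde{w}}_2}$ is again a toric orbifold bundle over $\mathbb{S}P^1_{\mathbf{v}_1,\mathbf{w}_1}$, now with a fiber that is topologically a projective space, and the section $S$ descends to a section of this bundle with trivial normal bundle. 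Without loss of generality I would normalize the fiber polytope so that $\pi(x_0)$ corresponds to the origin vertex; then the data of the bundle polytope reduces, as computed in Section 4, to a single tuple of integers $(a_1,\ldots,a_k)$ parameterizing the slanted face.

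Next I would analyze these $a_i$ via the normal bundle of $S$. The equivariant splitting of the normal bundle yields complex line orbifold bundles $L_1,\ldots,L_k$ over $\pi(S)$, each corresponding to one of the coordinate directions of the tangent cone at $x_0$, and each $L_i$ coincides with the normal bundle to $S$ inside the suborbifold of $X$ associated with the two-dimensional face spanned by $e_i$ and the base direction. Triviality of the total normal bundle forces each $L_i$ to be trivial. Pushing this forward to the underlying complex orbifold (where codimension-one labels are forgotten) we obtain complex line bundles $\tilde{L}_i$ which must also be trivial: any non-zero section would violate the triviality of $L_i$. But the Chern classes of the $\tilde{L}_i$ are precisely the integers $a_i$ in the bundle description of the polytope, so all $a_i$ vanish. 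Thus $X\slash G_{\mathbf{\tilde{w}}_2}$ is the standard product $\mathbb{S}P^1_{\mathbf{v}_1,\mathbf{w}_1}\times(\mathbb{S}P^k_{\mathbf{v}_2,\mathbf{w}_2}\slash G_{\mathbf{\tilde{w}}_2})$.

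Finally I would lift the product structure back to $X$. By Theorem \ref{Covering}, $X\to X\slash G_{\mathbf{\tilde{w}}_2}$ is a covering of the form quotient-by-a-finite-torus-subgroup; since $G_{\mathbf{\tilde{w}}_2}$ acts trivially on the first factor of the product and by a lattice covering on the second, the covering must itself be a product of toric coverings, which identifies $X$ with $\mathbb{S}P^1_{\mathbf{v}_1,\mathbf{w}_1}\times\mathbb{S}P^k_{\mathbf{v}_2,\mathbf{w}_2}$ equipped with its standard toric structure.

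The main obstacle I anticipate is the passage from triviality of the orbifold line bundles $L_i$ to triviality of their underlying complex line bundles $\tilde{L}_i$, because these two notions disagree in the presence of codimension-one isotropy on the base $\mathbb{S}P^1_{\mathbf{v}_1,\mathbf{w}_1}$; one has to verify carefully that the section $S$ meets the codimension-one singular strata only transversely and that the triviality argument survives this. A secondary point that requires attention is ensuring that the lift in the final step does not produce a disconnected total space or a twisted product, which is where the uniqueness clause of Theorem \ref{Covering} is essential.
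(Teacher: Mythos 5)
Your proposal follows essentially the same route as the paper's proof: quotient by $G_{\mathbf{\tilde{w}}_2}$ so that the fiber's underlying complex orbifold becomes $\mathbb{C}P^k$, split the (trivial) normal bundle of the section into equivariant line bundles whose Chern classes are the slanted-face integers $a_i$, conclude $a_i=0$ so the quotient is the standard product, and then recover $X$ itself via the toric covering description of Theorem \ref{Covering}. The two delicate points you flag (passing from the orbifold line bundles $L_i$ to the underlying bundles $\tilde{L}_i$, and ensuring the lift back gives the untwisted product) are precisely the points the paper also treats, in the same brief manner, so there is no substantive difference in approach.
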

For the first goal we present the following Theorems:
\begin{tw}\label{Soc1} Let $X$ be a toric orbifold symplectomorphic to $\mathbb{S}P^{1}_{\mathbf{v}_1,\mathbf{w}_1}\times\mathbb{S}P^{k}_{\mathbf{v}_2,\mathbf{w}_2}$ endowed with the symplectic form $\lambda_1\omega_{\mathbf{v}_1,\mathbf{w}_1}+\lambda_2\omega_{\mathbf{v}_2,\mathbf{w}_2}$ for $k\geq 2$ and $\lambda_2\geq\lambda_1$. Then $X$ is isomorphic to the standard toric structure on $\mathbb{S}P^{1}_{\mathbf{v}_1,\mathbf{w}_1}\times\mathbb{S}P^{k}_{\mathbf{v}_2,\mathbf{w}_2}$ with the given symplectic form.
\begin{proof} Without loss of generality we assume that $\lambda_2$ is positive. It then suffices to prove the theorem for the normalized form $\omega:=\lambda\omega_{\mathbf{v}_1,\mathbf{w}_1}+\omega_{\mathbf{v}_2,\mathbf{w}_2}$ with $\lambda=\frac{\lambda_1}{\lambda_2}$.
\newline\indent As in the previous section we deduce that $X$ is a toric bundle of $\mathbb{S}P^{1}_{\mathbf{v}_1,\mathbf{w}_1}$ over $\mathbb{S}P^{k}_{\mathbf{v}_2,\mathbf{w}_2}$ or vice versa. Moreover, since Proposition \ref{coho} is true for $k_1=1$ we can use the argument in the proof of Theorem \ref{NoSoc} verbatim to treat the case when $\mathbb{S}P^{1}_{\mathbf{v}_1,\mathbf{w}_1}$ is a fiber (regardless of $\lambda_1$ and $\lambda_2$). Hence, it suffices to examine the case when $\mathbb{S}P^{1}_{\mathbf{v}_1,\mathbf{w}_1}$ is the base space.
\newline\indent Firstly, let us do this under the assumption that the underlying complex orbifold of $\mathbb{S}P^{k}_{\mathbf{v}_2,\mathbf{w}_2}$ is $\mathbb{C}P^k$. Let $a_i$ be as in the description of a toric bundle of orbifolds via a slanted face. We note that each $a_i=0$ since otherwise this would give a new toric structure on the underlying complex orbifold (considered with the symplectic form $\lambda\omega_{1}+\omega_{k}$ where $\omega_{k}$ denotes the standard symplectic structure on $\mathbb{C}P^k$) which is a contradiction with \cite{DmD}.
\newline\indent For the general case, we note that for $\mathbb{S}P^k_{\mathbf{v}_2,\mathbf{w}_2}$ the lowest dimensional strata (in the stratification of the orbifold by singularity type) is a manifold diffeomrphic to $\mathbb{C}P^s$ for some $0\leq s\leq k$. The case $s=0$ is in fact treated in greater generality in Theorem \ref{sing} and hence here we assume $1\leq s$. This implies by section $2.4$ of \cite{DmD} that in the underlying complex orbifold of $X$ the toric structure restricted to $\mathbb{C}P^1\times\mathbb{C}P^s$ (treated as a subset of the underlying complex orbifold) is isomorphic to the standard one. Let $S$ be the image of $\mathbb{C}P^1\times\{x\}$ corresponding to an edge in the polytope of $\mathbb{C}P^1\times\mathbb{C}P^s$. Taking now the rational homology class $[S]$ of $S$ in $\mathbb{C}P^1\times\mathbb{C}P^s$ we see that $[S]=[\mathbb{C}P^1\times\{x\}]$ since a diffeomorphism of $\mathbb{C}P^1\times\mathbb{C}P^s$ has to preserve the standard generators in cohomology up to sign (in order to preserve the ring structure) which in turn is fixed due to the symplectic structure. Hence, using the Hurewicz Theorem they are homotopic in $\mathbb{C}P^1\times\mathbb{C}P^s$. The bundle normal to $S$ splits into the bundle normal to $S$ in $\mathbb{C}P^1\times\mathbb{C}P^s$ (which is trivial due to the above isomorphism) and the bundle normal to $\mathbb{C}P^1\times\mathbb{C}P^s$ restricted to $S$ (which is trivial by the above homotopy). Hence, the normal bundle to $S$ is trivial in the underlying complex orbifold of $X$, implying that the normal bundle to the corresponding manifold $S$ in $X$ is also trivial (via the same argument as in the proof of Lemma \ref{foot}). This concludes the proof by Lemma \ref{foot}. 
\end{proof}
\end{tw}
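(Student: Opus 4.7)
The plan is to exploit the same bundle dichotomy used in Theorem \ref{NoSoc} and then dispatch each case separately, with the restriction $\lambda_2\geq\lambda_1$ entering only when $\mathbb{S}P^{1}_{\mathbf{v}_1,\mathbf{w}_1}$ appears as the base. After rescaling we may work with $\omega=\lambda\omega_{\mathbf{v}_1,\mathbf{w}_1}+\omega_{\mathbf{v}_2,\mathbf{w}_2}$ where $\lambda=\lambda_1/\lambda_2\leq 1$. Proposition \ref{Sbundle} then tells us that $X$ is a toric bundle with one factor as fiber and the other as base, so the proof reduces to these two bundle cases.

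If $\mathbb{S}P^{1}_{\mathbf{v}_1,\mathbf{w}_1}$ is the fiber, I would observe that Proposition \ref{coho} is stated for $k_1\geq 1$ and $k_2\geq 2$, so the cohomological/covering argument from the proof of Theorem \ref{NoSoc} carries over verbatim: pass to $X/G_{\tilde{\mathbf{w}}_1,\tilde{\mathbf{w}}_2}$, use Lemma \ref{Cbundle} to realize it as a bundle over manifold labeled projective spaces, verify the two hypotheses of Proposition \ref{coho} via the Smith-normal-form lift of degree-two generators, conclude triviality of the bundle structure, and finally pull back through the covering to recover the standard structure on $X$ itself. This case does not use the inequality on the $\lambda_i$.

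If $\mathbb{S}P^{1}_{\mathbf{v}_1,\mathbf{w}_1}$ is the base, I split into two subcases. First, assume the underlying complex orbifold of $\mathbb{S}P^{k}_{\mathbf{v}_2,\mathbf{w}_2}$ is $\mathbb{C}P^{k}$; then the Chern numbers $a_i$ describing the slanted facet of the bundle polytope must all vanish, because a nonzero $a_i$ would yield a nontrivial toric structure on $\mathbb{C}P^1\times\mathbb{C}P^k$ endowed with $\lambda\omega_{1}+\omega_{k}$, and for $\lambda\leq 1$ and $k\geq 2$ this is ruled out by the manifold results of \cite{DmD}. This is precisely where the inequality $\lambda_2\geq\lambda_1$ is essential. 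In the general subcase I would identify the lowest-dimensional stratum of $\mathbb{S}P^{k}_{\mathbf{v}_2,\mathbf{w}_2}$, which is diffeomorphic to $\mathbb{C}P^s$ for some $0\leq s\leq k$; the case $s=0$ will be handled by the subsequent Theorem \ref{sing}, so I assume $s\geq 1$. Then inside the underlying complex orbifold of $X$ the toric structure restricted to $\mathbb{C}P^1\times\mathbb{C}P^s$ is standard by \cite{DmD}, which gives an edge section $S$ whose normal bundle, split into the part tangent to $\mathbb{C}P^1\times\mathbb{C}P^s$ and the part normal to it, is trivial (the former by the standardness, the latter by a homotopy argument comparing the homology class $[S]$ with $[\mathbb{C}P^1\times\{x\}]$ via Hurewicz and the forced preservation of cohomology generators by a symplectomorphism). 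With the normal bundle trivial in the underlying complex orbifold, the argument of Lemma \ref{foot} promotes this to triviality in $X$ itself, and then Lemma \ref{foot} concludes that $X$ is standard.

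The main obstacle is the last subcase: verifying that the normal bundle of the chosen section $S$ in the underlying complex orbifold is trivial requires comparing homology classes in $\mathbb{C}P^1\times\mathbb{C}P^s$ under a putative symplectomorphism, and then promoting this to information about the orbifold normal bundle of $S$ in $X$. The appeal to \cite{DmD} inside the underlying complex orbifold is what makes the argument work only in the regime $\lambda_2\geq\lambda_1$; without this hypothesis the manifold case already fails. Everything else (the bundle dichotomy, the cohomological rigidity in the fiber case, and the invocation of Lemma \ref{foot}) is routine given the earlier results of the paper.
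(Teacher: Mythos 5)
Your proposal matches the paper's own proof essentially step for step: the same rescaling, the same bundle dichotomy from Proposition \ref{Sbundle}, the same use of Proposition \ref{coho} and the Theorem \ref{NoSoc} argument when $\mathbb{S}P^{1}_{\mathbf{v}_1,\mathbf{w}_1}$ is the fiber, the same appeal to \cite{DmD} to kill the $a_i$ when the underlying complex orbifold of the fiber is $\mathbb{C}P^k$, and the same stratum/Hurewicz/normal-bundle argument feeding into Lemma \ref{foot} (with $s=0$ deferred to Theorem \ref{sing}) in the general base case. This is the paper's argument, correctly reproduced.
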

\begin{tw}\label{11} Let $X$ be a toric orbifold symplectomorphic to $\mathbb{S}P^{1}_{\mathbf{v}_1,\mathbf{w}_1}\times\mathbb{S}P^{1}_{\mathbf{v}_2,\mathbf{w}_2}$ endowed with the symplectic form $\lambda_1\omega_{\mathbf{v}_1,\mathbf{w}_1}+\lambda_2\omega_{\mathbf{v}_2,\mathbf{w}_2}$ for $\lambda_2=\lambda_1$. Then $X$ is isomorphic to the standard toric structure on $\mathbb{S}P^{1}_{\mathbf{v}_1,\mathbf{w}_1}\times\mathbb{S}P^{1}_{\mathbf{v}_2,\mathbf{w}_2}$ with the given symplectic form.
\begin{proof} In this case the underlying complex orbifold is simply the product of spheres and hence the theorem follows immediately from its analogue in \cite{DmD}.
\end{proof}
\end{tw}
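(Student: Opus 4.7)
The plan is to reduce the statement to the manifold analogue from \cite{DmD} by passing to the underlying complex orbifold, which in the $k_1=k_2=1$ case happens to be an honest symplectic toric $4$-manifold. First, I would apply Proposition \ref{Sbundle} to $X$ to conclude that (up to swapping factors) $X$ is a toric orbifold bundle of $\mathbb{S}P^{1}_{\mathbf{v}_1,\mathbf{w}_1}$ over $\mathbb{S}P^{1}_{\mathbf{v}_2,\mathbf{w}_2}$. By Theorem \ref{boclass} the labeled polytope $\Delta$ of $X$ is therefore a labeled orbifold bundle polytope: combinatorially it is a quadrilateral, its four labels coincide (as a multiset) with those of the standard product polytope, and in each of the $1$-dimensional factors the vector $\mathbf{v}_i\in\mathbb{N}$ is forced to be $1$.

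Second, since $\mathbf{v}_1=\mathbf{v}_2=1$, the primitive conormals to the two ``fiber'' facets of $\Delta$ span a $\mathbb{Z}$-basis together with the primitive conormals to the two ``base'' facets at each vertex. Consequently the polytope $\Delta$, with all labels replaced by $1$, is automatically a Delzant (smooth) polytope. The underlying complex orbifold $X'$ is therefore a genuine symplectic toric manifold: concretely, a toric $\mathbb{S}^2$-bundle over $\mathbb{S}^2$. Applying the same label-erasing procedure to the standard product $(\mathbb{S}P^{1}_{\mathbf{v}_1,\mathbf{w}_1}\times\mathbb{S}P^{1}_{\mathbf{v}_2,\mathbf{w}_2},\lambda_1\omega_{\mathbf{v}_1,\mathbf{w}_1}+\lambda_2\omega_{\mathbf{v}_2,\mathbf{w}_2})$ produces the split $(\mathbb{S}^2\times\mathbb{S}^2,\lambda_1\omega_1+\lambda_2\omega_2)$. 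The assumed orbifold symplectomorphism transports the cohomology class $[\omega]$ and the Poincar\'e duals of the singular divisors corresponding to the four facets, which by the (unlabeled) Delzant construction pin down $X'$ up to equivariant symplectomorphism; hence $X'\cong(\mathbb{S}^2\times\mathbb{S}^2,\lambda_1\omega_1+\lambda_2\omega_2)$ as symplectic toric manifolds.

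Third, with $\lambda_1=\lambda_2$, the manifold analogue of the present statement from \cite{DmD} says that the split toric structure is the unique toric structure on $(\mathbb{S}^2\times\mathbb{S}^2,\lambda_1\omega_1+\lambda_1\omega_2)$ up to isomorphism. This forces the bundle parameter $a_1$ in the description of $\Delta$ via its slanted face to vanish, so $\Delta$ (as an unlabeled polytope) is the standard square. Once $\Delta$ has the standard shape, its labels are already determined (as the orders of the isotropy groups of generic points of the facets, which are invariants of the symplectic orbifold by the Remark after Theorem \ref{DelzantOrbifolds}) to coincide with the labels of $\mathbb{S}P^{1}_{\mathbf{v}_1,\mathbf{w}_1}\times\mathbb{S}P^{1}_{\mathbf{v}_2,\mathbf{w}_2}$, and the claim follows from the Delzant correspondence for orbifolds.

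The main technical obstacle I expect is the bookkeeping in step two: identifying the symplectic form on $X'$ without ambiguity from the orbifold symplectomorphism, given that replacing labels by $1$ changes the group $K$ in the Delzant construction. This is what forces the use of the specific setup of the $1$-dimensional factor case (where $\mathbf{v}_i=1$ and Delzantness of $\Delta$ comes for free), and it is what makes the argument genuinely simpler than in Theorem \ref{NoSoc}.
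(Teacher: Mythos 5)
Your proposal is correct and follows essentially the same route as the paper's one-line proof: pass to the underlying complex orbifold, observe that in the $k_1=k_2=1$ case it is a smooth toric $\mathbb{S}^2$-bundle over $\mathbb{S}^2$ symplectomorphic to $(\mathbb{S}^2\times\mathbb{S}^2,\lambda_1\omega_1+\lambda_1\omega_2)$, and invoke the uniqueness result of \cite{DmD} for equal weights to kill the twist and then recover the labels; your appeals to Proposition \ref{Sbundle} and Theorem \ref{boclass} simply make this reduction explicit. The one point to fix is the phrase ``pin down $X'$ up to equivariant symplectomorphism'': transporting $[\omega]$ and the singular strata can only identify the underlying manifold with $(\mathbb{S}^2\times\mathbb{S}^2,\lambda_1\omega_1+\lambda_1\omega_2)$ up to a non-equivariant symplectomorphism (an equivariant identification at that stage would render the citation of \cite{DmD} superfluous, and is not available), but this weaker identification is exactly what your third step actually uses, so the argument stands.
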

Next we prove that if $\mathbb{S}P^{k}_{\mathbf{v}_2,\mathbf{w}_2}$ is sufficiently singular then the restrictions on $\lambda_1$ and $\lambda_2$ are superfluous.
\begin{tw}\label{sing} Let $X$ be a toric orbifold symplectomorphic to $\mathbb{S}P^{1}_{\mathbf{v}_1,\mathbf{w}_1}\times\mathbb{S}P^{k}_{\mathbf{v}_2,\mathbf{w}_2}$ with $k\geq 2$, endowed with the symplectic form $\lambda_1\omega_{\mathbf{v}_1,\mathbf{w}_1}+\lambda_2\omega_{\mathbf{v}_2,\mathbf{w}_2}$. Furthermore, assume that $\mathbb{S}P^{k}_{\mathbf{v}_2,\mathbf{w}_2}$ has a singular point $x_0$ such that there exists a neighbourhood $U$ of $x_0$ such that the ranks of isotropy groups of points in $U\backslash\{x_0\}$ are strictly smaller than the rank of the isotropy group of $x_0$. Then $X$ is isomorphic to the standard toric structure on $\mathbb{S}P^{1}_{\mathbf{v}_1,\mathbf{w}_1}\times\mathbb{S}P^{k}_{\mathbf{v}_2,\mathbf{w}_2}$ with the given symplectic form.
\begin{proof} Let us again start by noting that $X$ is a toric bundle of $\mathbb{S}P^{1}_{\mathbf{v}_1,\mathbf{w}_1}$ over $\mathbb{S}P^{k}_{\mathbf{v}_2,\mathbf{w}_2}$ or vice versa. Moreover, since Proposition \ref{coho} is true for $k_1=1$ we can use the argument in the proof of Theorem \ref{NoSoc} verbatim to treat the case when $\mathbb{S}P^{1}_{\mathbf{v}_1,\mathbf{w}_1}$ is a fiber.
\newline\indent In this case the vertical interval containing $x_0$ corresponds to the suborbifold $S=\mathbb{S}P^{1}_{\mathbf{v}_1,\mathbf{w}_1}\times \{x_0\}$. It is clear that $S$ satisfies the assumptions of Lemma \ref{foot} and hence the proof is concluded.
\end{proof}
\end{tw}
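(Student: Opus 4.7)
The plan is to combine Proposition \ref{Sbundle} with Lemma \ref{foot}, handling the two possible bundle orientations separately: one by invoking the argument of Theorem \ref{NoSoc}, and the other by producing a section whose normal bundle is trivial by virtue of the assumed singularity at $x_0$.

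First I would apply Proposition \ref{Sbundle} to conclude that $X$ is isomorphic, as a toric orbifold, either to a toric bundle with fiber $\mathbb{S}P^1_{\mathbf{v}_1,\mathbf{w}_1}$ over base $\mathbb{S}P^k_{\mathbf{v}_2,\mathbf{w}_2}$, or with the roles reversed. In the first sub-case, Proposition \ref{coho} remains valid for $k_1=1$, so the proof of Theorem \ref{NoSoc} applies verbatim; in particular the passage to $X/G_{\tilde{\mathbf{w}}_1,\tilde{\mathbf{w}}_2}$, together with the Smith normal form argument yielding a monomorphism in integer cohomology, forces isomorphism with the standard product. So I am reduced to the case where the fiber is $\mathbb{S}P^k_{\mathbf{v}_2,\mathbf{w}_2}$ and the base is $\mathbb{S}P^1_{\mathbf{v}_1,\mathbf{w}_1}$.

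In this remaining case, the hypothesis on $x_0$ forces $x_0$ to be a vertex of the fiber polytope $\widetilde{\Delta}_2$, since the isotropy rank strictly drops from its maximal value only at vertices. The product edge $\{x_0\}\times\widehat{\Delta}_1$ of the bundle polytope $\Delta$ then corresponds to a $2$-dimensional suborbifold $S$ of $X$ which projects to the base as a section of $\pi$ and is identified with $\mathbb{S}P^1_{\mathbf{v}_1,\mathbf{w}_1}\times\{x_0\}$. The facets of $\Delta$ meeting the interior of this edge are exactly the $k$ fiber conormals $\iota(\widetilde{\eta}_i)$ through $x_0$; none of them has a base-direction component under the splitting $0\to\widetilde{\mathfrak{t}}\to\mathfrak{t}\to\widehat{\mathfrak{t}}\to 0$. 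I would use this combinatorial fact, together with a local uniformising chart around $S$ furnished by the Slice Theorem (as in the proof of Theorem \ref{boclass}), to exhibit the required product tubular neighbourhood of $S$ with fiber equal to the tangent cone of $\mathbb{S}P^k_{\mathbf{v}_2,\mathbf{w}_2}$ at $x_0$. Lemma \ref{foot} then concludes the proof.

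The main obstacle I anticipate is making rigorous the triviality of the normal bundle to $S$ in the required product form. The combinatorial absence of base-direction conormals at the edge over $x_0$ is morally what forces the $k$ equivariant line subbundles of the normal bundle of $S$ to be trivial as orbifold line bundles over $\mathbb{S}P^1_{\mathbf{v}_1,\mathbf{w}_1}$, but phrasing this carefully in orbifold language (rather than appealing to classical vector-bundle statements on a smooth $\mathbb{C}P^1$) will require some care; once done, Lemma \ref{foot} takes over.
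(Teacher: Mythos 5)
Your reduction via Proposition \ref{Sbundle} and your treatment of the sub-case where $\mathbb{S}P^{1}_{\mathbf{v}_1,\mathbf{w}_1}$ is the fiber (Proposition \ref{coho} with $k_1=1$ plus the argument of Theorem \ref{NoSoc}) agree with the paper. The gap is in the remaining sub-case, precisely at the step you flag as the anticipated obstacle: the combinatorial fact you propose to use cannot yield triviality of the normal bundle of $S$. In \emph{every} labeled orbifold bundle polytope with fiber a simplex, the facets through a section edge $\{\text{vertex of }\widetilde{\Delta}\}\times\widehat{\Delta}$ are fiber-type facets whose conormals are $\iota(\widetilde{\eta}_i)$, with no base component --- this holds whether or not the bundle is twisted, because the twisting is recorded in the fiber components of the conormals of the \emph{base-type} facets (equivalently, in the integers $a_i$ attached to the slanted facet in the normalization of Section 4), which your argument never touches. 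Already for a Hirzebruch surface the section corresponds to a facet with purely fiber-directional conormal and yet has self-intersection $\pm n$; the same happens for $\mathbb{P}(\mathcal{O}\oplus\cdots\oplus\mathcal{O}(n))$ over $\mathbb{C}P^1$ in higher dimensions. Moreover, as you use it, the hypothesis on $x_0$ only places it at a vertex of the fiber polytope; since every fiber polytope has vertices and every such bundle has sections over them, your argument, if it worked, would prove uniqueness with no singularity hypothesis and no restriction on $\lambda_1,\lambda_2$, contradicting the known non-uniqueness for $\mathbb{S}^2\times\mathbb{S}^2$ coming from even Hirzebruch surfaces (the reason Theorem \ref{Soc1} requires $\lambda_2\geq\lambda_1$). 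So no slice-chart refinement of this purely polytopal observation can close the step.

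What the paper does at this point is different in kind: the rank condition on $x_0$ concerns the orbifold local groups, which are invariants of $X$ as a symplectic orbifold, so the subset $S=\mathbb{S}P^{1}_{\mathbf{v}_1,\mathbf{w}_1}\times\{x_0\}\subset X$ (transported through the given symplectomorphism) is singled out intrinsically, independently of the toric structure. Hence in the new structure it must be the toric suborbifold lying over an edge of the bundle polytope --- the ``vertical interval containing $x_0$'', i.e.\ a section --- and its tubular neighbourhood is a product of $\mathbb{S}P^{1}_{\mathbf{v}_1,\mathbf{w}_1}$ with a neighbourhood of $x_0$ in $\mathbb{S}P^{k}_{\mathbf{v}_2,\mathbf{w}_2}$ simply because that is what it is in the product orbifold. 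In other words, the triviality required by Lemma \ref{foot} is imported from the known product symplectic structure through the singular stratum, not derived from the combinatorics of the unknown polytope; your proposal transfers no geometric information from the product structure beyond cohomology, and that is exactly where it fails.
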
 \begin{tw}Let $X$ be a toric orbifold symplectomorphic to $\mathbb{S}P^{1}_{\mathbf{v}_1,\mathbf{w}_1}\times\mathbb{S}P^{1}_{\mathbf{v}_2,\mathbf{w}_2}$ endowed with the symplectic form $\lambda_1\omega_{\mathbf{v}_1,\mathbf{w}_1}+\lambda_2\omega_{\mathbf{v}_2,\mathbf{w}_2}$. Furthermore, assume that both $\mathbb{S}P^{1}_{\mathbf{v}_1,\mathbf{w}_1}$ and $\mathbb{S}P^{1}_{\mathbf{v}_2,\mathbf{w}_2}$ have singular points $x_i\in\mathbb{S}P^{1}_{\mathbf{v}_i,\mathbf{w}_i}$ such that there exists a neighbourhood $U_i$ of $x_i$ such that the ranks of isotropy groups of points in $U_i\backslash\{x_i\}$ are strictly smaller than the rank of the isotropy group of $x_i$. Then $X$ is isomorphic to the standard toric structure on $\mathbb{S}P^{1}_{\mathbf{v}_1,\mathbf{w}_1}\times\mathbb{S}P^{1}_{\mathbf{v}_2,\mathbf{w}_2}$ with the given symplectic form.
\begin{proof}
Since the situation is now symmetric we can verbatim use the proof of Theorem \ref{sing} were we can assume without loss of generality that $\mathbb{S}P^{1}_{\mathbf{v}_2,\mathbf{w}_2}$ is the fiber of the bundle (and consequently we substitute $x_2$ for $x_0$).
\end{proof}
\end{tw}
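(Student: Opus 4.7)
The plan is to mimic the concluding step of Theorem \ref{sing} by exploiting the symmetry of the hypotheses to reduce to the situation already handled there. First, I would appeal to Proposition \ref{Sbundle} to conclude that, up to isomorphism of toric orbifolds, $X$ is a toric bundle whose fiber is one of the two $\mathbb{S}P^{1}_{\mathbf{v}_i,\mathbf{w}_i}$ and whose base is the other. In Theorem \ref{sing} the analogous dichotomy was broken by the dimensional asymmetry $k_1=1$, $k_2\geq 2$: one orientation was handled via Proposition \ref{coho} and the other via Lemma \ref{foot}. Here both factors have dimension one, so Proposition \ref{coho} is inapplicable in either orientation and everything must be routed through Lemma \ref{foot}; fortunately the hypothesis of a locally maximal rank singular point is now present in both factors, so we are free to choose which one to call the fiber. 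Following the hint in the excerpt, take $\mathbb{S}P^{1}_{\mathbf{v}_2,\mathbf{w}_2}$ to be the fiber and $\mathbb{S}P^{1}_{\mathbf{v}_1,\mathbf{w}_1}$ to be the base.

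Next, I would locate the section $S$ required by Lemma \ref{foot}. The point $x_2\in \mathbb{S}P^{1}_{\mathbf{v}_2,\mathbf{w}_2}$ is singular and has isotropy rank strictly maximal on a neighborhood; since regular points of a one-dimensional labeled projective space have trivial torus stabilizer and only the two vertices of the interval polytope carry non-trivial action isotropy, $x_2$ is necessarily a fixed point of the fiber action, hence a vertex of the fiber polytope $\widetilde\Delta$. Consequently the edge $\{x_2\}\times\widehat\Delta$ of the labeled bundle polytope of $X$ corresponds to a suborbifold $S\subset X$ which is a global section of the bundle, diffeomorphic to $\mathbb{S}P^{1}_{\mathbf{v}_1,\mathbf{w}_1}$.

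Finally, I would run the closing argument of Theorem \ref{sing} verbatim with $x_2$ playing the role of $x_0$: the locally maximal rank condition at $x_2$ ensures that the tubular neighborhood of $S$ in $X$ splits as a product of $S$ with the tangent cone to the fiber at $x_2$, which is precisely the triviality statement required by Lemma \ref{foot}. Invoking that lemma then yields an isomorphism of $X$ with the standard toric structure on $\mathbb{S}P^{1}_{\mathbf{v}_1,\mathbf{w}_1}\times\mathbb{S}P^{1}_{\mathbf{v}_2,\mathbf{w}_2}$ carrying the prescribed symplectic form. The main obstacle I would anticipate is making rigorous the step ``the locally maximal rank hypothesis forces the normal bundle to $S$ to be trivial,'' which is the content of the laconic ``it is clear that'' in the proof of Theorem \ref{sing}; this is the only genuinely non-combinatorial input, but once it has been packaged into Lemma \ref{foot} it can be cited here without being re-derived.
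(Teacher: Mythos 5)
Your proposal is correct and takes essentially the same route as the paper: reduce via Proposition \ref{Sbundle} to a toric bundle structure, use the symmetry of the hypotheses to assume without loss of generality that the factor containing the distinguished singular point $x_2$ is the fiber, and then conclude exactly as in the closing step of the proof of Theorem \ref{sing} (with $x_2$ in place of $x_0$) by verifying the hypotheses of Lemma \ref{foot}. Your additional remarks --- that $x_2$ must be a fixed point corresponding to a vertex of the fiber polytope, and that the Proposition \ref{coho} route is unavailable when both factors are one-dimensional, which is why the singular-point hypothesis is needed on both factors --- simply make explicit what the paper leaves implicit.
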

\begin{rem} It is interesting to note that the previous Theorem reduces the usefulness of Theorem \ref{11} to the case when at least one of the labeled projective spaces is simply $\mathbb{S}^2\cong\mathbb{C}P^1$.
\end{rem}

\end{document}